\newcommand{\mysection}{\setcounter{equation}{0} \section}
\newtheorem{Definition}{Definition}[section]
\newtheorem{Proposition}{Proposition}[section]
\newtheorem{Lemme}{Lemma}[section]
\newtheorem{Theoreme}{Theorem}
\newtheorem{Remarque}{Remark}[section]
\newtheorem{Example}{Example}[section]
\numberwithin{equation}{section}
\newcommand\R{\mathbb{R}}
\newcommand\vu{\vec{u}}
\title{\bf Blow-up for a nonlinear PDE with fractional Laplacian and singular quadratic nonlinearity}
\author{Diego Chamorro\footnote{Laboratoire de Math\'ematiques et Mod\'elisation d'Evry, Universit\'e d'Evry Val d'Essonne, France.}, \, Elena Issoglio\footnote{School of Mathematics, University of Leeds, UK.} \footnote{Corresponding author}} 
\begin{document}
\maketitle
\begin{scriptsize}
\abstract{We consider an $n$-dimensional parabolic-type PDE with a diffusion given by a fractional Laplacian operator and with a quadratic nonlinearity of the `gradient' of the solution, convoluted with a singular term $\mathfrak{b}$. Our first result is the  well-posedness for this problem: We show existence and uniqueness of a (local in time) mild solution.  The main result is about blow-up of said solution, and in particular we find sufficient conditions on the initial datum  and on the term $\mathfrak{b}$ to ensure  blow-up of the solution  in finite time.}\\

\noindent\textbf{Keywords:} nonlinear PDE; mild solutions; blow-up; singular coefficients.\\
\textbf{MSC2020:} 35A01; 35B44; 35R11
\end{scriptsize}
\mysection{Introduction}\label{Secc_Intro}
In this article we consider the following partial differential equation 
\begin{equation}\label{EquationIntro}
\begin{cases}
\partial_t\mathfrak{u}=-(-\Delta)^{\frac{\alpha}{2}}
\mathfrak{u} +\left((-\Delta)^{\frac{1}{2}}\mathfrak{u}\right)^2\ast \mathfrak{b},\\[4mm]
\mathfrak{u}(0,x)=\mathfrak{u}_0(x), \quad x\in \mathbb{R}^n,
\end{cases}
\end{equation}
where $\mathfrak{u}_0: \mathbb{R}^n\longrightarrow \mathbb{R}$ is a given initial function, the unknown 
$\mathfrak{u}:[0,+\infty[\times \mathbb{R}^n\longrightarrow \mathbb{R}$ is a real-valued function, the term $\mathfrak{b}$ is a real-valued (generalised) function on $ [0,+\infty[\times \mathbb{R}^n$ (in particular it is singular in the space variable $x\in\R^n$ because it belongs to a fractional Sobolev space of low or even negative order), and $(-\Delta)^{\frac{\alpha}{2}}$ with $0<\alpha\leq 2$ is the fractional Laplacian operator (see Section \ref{Secc_Theorems} below for a precise definition of all these objects). Our main objective is to  study well posedness and  blow-up times for solutions to equation \eqref{EquationIntro}  in any dimension $n\geq 1$. \\

Blow-up questions for similar fluid-dynamics equations have been studied in the past. Notably the most important fluid-dynamic equation is  the \emph{Navier-Stokes} equation, given by
\begin{equation}\label{NS}
\partial_t\vu=\Delta \vu-(\vu\cdot \nabla)\vu-\nabla p, \qquad div(\vu)=0,\qquad \vu(0,x)=\vu_0(x),
\end{equation}
where $\vu:[0,+\infty[\times \mathbb{R}^3\longrightarrow \mathbb{R}^3$ is the fluid velocity, $p:[0,+\infty[\times \mathbb{R}^3\longrightarrow \mathbb{R}$ is the pressure of the fluid and $\vu_0:\mathbb{R}^3\longrightarrow \mathbb{R}^3$ is an initial data. Although existence of mild solutions for \eqref{NS} is known in many different functional spaces (see the book \cite{PGLR2}), the study of unique global solutions for big initial data is a very challenging open problem. 
In order to study the possible blow-up for the Navier-Stokes equations, Montgomery-Smith in \cite{Montgomery} proposes a simplified scalar equation, called the \emph{Cheap Navier-Stokes} equation, which reads 
\begin{equation}\label{CheapNS}
\partial_t u=\Delta u+(-\Delta)^{\frac{1}{2}}(u^2), \qquad u(0,x)=u_0,
\end{equation}
where $u:[0,+\infty[\times \mathbb{R}^3\longrightarrow \mathbb{R}$. The link between the cheap and the standard Navier-Stokes equation can be seen by noting that, under mild assumptions, the nonlinearity $(\vu\cdot \nabla)\vu$ can be rewritten in the form $div(\vu\otimes \vu )$. 
From the point of view of regularity, the terms $div(\vu\otimes \vu ) $ and $ (-\Delta)^{\frac12}({u}^2)$ are quite similar (both involving the {\em ``derivative of $u^2$''}), however the action of the operator $(-\Delta)^{\frac12}$ in the Fourier variable is given by the symbol $|\xi|$, which is positive, while the symbol of the derivatives $\partial_j$  for $1\leq j\leq n$ in the  divergence operator is $i\xi_j$ and this symbol conveys sign information that is usually very difficult to exploit. 
In \cite{Montgomery} the author uses Fourier-based tools to  show blow-up in finite time for the solution of \eqref{CheapNS} when the initial data $u_0$ is  suitably chosen. He also finds an explicit upper bound for the blow-up time. Note that a vectorial version of (\ref{CheapNS}) is considered in \cite{Gallagher}, see also \cite{Sinai} or \cite{Tao}. \\

In recent years we also started to see a growing interest in the study of PDEs with singular coefficients (like $\mathfrak b$ in our case), see e.g.\ \cite{hinz_et.al, hinz_zahle, Issoglio13, Issoglio19}  to mention only a few works.  These equations mainly arise from Physics, but may find other applications in different fields too. One possible interpretation of the singular coefficient is to view it as one realisation of some random noise {(hence linking to Stochastic PDEs)}. Depending on the degree of singularity  of the noise, different techniques can be employed to solve SPDEs, the most general being regularity structures \cite{Hairer14, Hairer13} and paracontrolled distributions \cite{gubinelli04, gubinelli-imkeller-perkowski, gubinelli-perkowski15}. 
In the present paper we do not need these sophisticated tools, because the degree of singularity of $\mathfrak b$ is not so bad. Instead we make use of more classical Fourier-based tools together with properties of fractional Sobolev spaces and the action of a suitable semigroup acting on those spaces. \\

Our interest in equation  \eqref{EquationIntro} was sparked by a similar non-linear equation with a distributional coefficient  which was introduced and analysed in  \cite{Issoglio19}. The equation reads
\begin{equation}\label{EquationIntro0}
\begin{cases}
\partial_tu=\Delta u +(\nabla u\cdot \nabla u)  \mathfrak{b},\\[4mm]
u(0,x)=u_0(x),
\end{cases}
\end{equation}
on $[0,T]\times\R^{n}$, where the term $\mathfrak{b}$ is assumed to be singular in the space variable, in particular $\mathfrak{b}\in L^{\infty}([0,T], \mathcal{C}^{-\gamma}(\mathbb{R}^n))$ with $0<\gamma<\frac{1}{2}$ (here the space $\mathcal C^{-\gamma}$ is a Besov space of negative order, hence it includes distributions). 
In  \cite{Issoglio19} the author proved the existence of  a unique mild solution $u:[0,T]\times \mathbb{R}^n\longrightarrow \mathbb{R}$ of \eqref{EquationIntro0} in the space $ \mathcal C^\nu ([0,T], \mathcal C^{2-\gamma+\varepsilon} )$ for some small $\nu, \varepsilon>0$. 
The non-linearity $\nabla u\cdot \nabla u$ imposes some restrictions on the time of existence $T$ of the mild solution, and in particular the unique mild solution of  (\ref{EquationIntro0})  found in \cite{Issoglio19} is local in time, {\em i.e.}, it only exists up to a small time $t_0<T$  (depending on $u_0$) or, equivalently, it exists until time $T$ but it must start with a small enough initial condition $u_0$ (depending on $T$). 
The question on whether the solution of \eqref{EquationIntro0} explodes in finite time is still open. 

Inspired by Montgomery-Smith  \cite{Montgomery}, we replace the nonlinear term $(\nabla u\cdot \nabla u)$ in \eqref{EquationIntro0} by $ ((-\Delta)^{{\frac12}}\mathfrak{u})^2$ in order to be able to apply  Fourier-based tools.  Indeed, from the point of view of regularity the terms $(\nabla u\cdot \nabla u)$ and $ ((-\Delta)^{{\frac12}}\mathfrak{u})^2$ are quite similar (much like in the Navier-Stokes case), but the term $\nabla u\cdot \nabla u$ encodes cancellation properties that are extremely difficult to handle, while   the operator  $(-\Delta)^{{\frac12}}$ has a positive symbol (like in the cheap Navier-Stokes equation \eqref{CheapNS}) which allows the use of Fourier-based tools. We note that  we also  replaced the pointwise product between the  nonlinearity and the term $\mathfrak{b}$ with a convolution. This  makes the equation in some sense smoother, hence one would expect that existence of a solution is easier to obtain while blow-up is harder to obtain.   The main technical advantage though is that the techniques we use here to show blow-up are  Fourier-based, which means that the convolution will become a product at the Fourier level, and the latter is easier to deal with. Finally we replaced the Laplacian with the more general fractional Laplacian.  \\

The plan of the article is the following: In Section \ref{Secc_Theorems},  after recalling some notation and useful facts, we  state our main results of well posedness of  equation \eqref{EquationIntro} in  Theorem \ref{Theo_Existence}, and of its blow-up  in Theorem \ref{Theo_BlowUp}.  Then we present the proof of well-posedness in Section \ref{Secc_Existence}, and of blow-up in Section \ref{Secc_BlowUp}.  We conclude with a short Appendix containing a technical proof.

\section{Notation and main results}\label{Secc_Theorems}
\subsection{Preliminaries}
We give here a precise definition of all the terms of equation (\ref{EquationIntro}) and we start with the diffusion operator $(-\Delta)^{\frac{\alpha}{2}}$ with $0<\alpha\leq 2$. In the particular case when $\alpha=2$, it is the usual Laplacian operator $-\Delta$. If $0<\alpha<2$, the operator $(-\Delta)^{\frac{\alpha}{2}}$ is defined at the Fourier level by the expression 
\begin{equation}\label{Def_FracLaplacian}
\widehat{(-\Delta)^{\frac{\alpha}{2}} \varphi}(\xi)=|\xi|^{\alpha}\widehat{\varphi}(\xi),
\end{equation}
for all functions $\varphi$ in the Schwartz class $\mathcal{S}(\mathbb{R}^n)$ and where $\widehat{\cdot }$ (or $\cdot^{\wedge}$) denotes the Fourier transform. In particular we have 
$$\widehat{(-\Delta)^{\frac{1}{2}} \varphi}(\xi)=|\xi|\widehat{\varphi}(\xi), \qquad \mbox{with }\varphi\in \mathcal{S}(\mathbb{R}^n).$$ 
For $s>0$ real, ]we define the operator $(Id-\Delta)^{\frac{s}{2}}$ by the symbol $(1+|\xi|^2)^{\frac{s}{2}}$, \emph{i.e.}:
\begin{equation*}
\left((Id-\Delta)^\frac{s}{2}\varphi\right)^{\wedge}(\xi) =(1+|\xi|^2)^{\frac{s}{2}} \widehat{\varphi}(\xi),
\end{equation*}
where $\varphi \in \mathcal{S}(\mathbb{R}^n)$. See \cite[Section 6.1 \& Section 6.2.1]{Grafakos2} for further details on these two operators.\\

For $0<\alpha<2$, the semigroup associated to the operator $-(-\Delta)^{\frac{\alpha}{2}}$ will be denoted by $e^{-t(-\Delta)^{\frac{\alpha}{2}}}$ and its action over functions in the Schwartz class $\mathcal{S}(\mathbb{R}^n)$ is given in the Fourier level by 
\begin{equation}\label{PositivitySemigroup}
\left(e^{-t(-\Delta)^{\frac{\alpha}{2}}}\varphi\right)^{\wedge}(\xi)= e^{-t|\xi|^\alpha}\widehat{\varphi}(\xi)=\widehat{\mathfrak{p}^{\alpha}_t}(\xi)\times \widehat{\varphi}(\xi),
\end{equation}
which implies that we have a convolution kernel $\mathfrak{p}^{\alpha}_t$:
\begin{equation}\label{Def_KernelSemigroup}
e^{-t(-\Delta)^{\frac{\alpha}{2}}}(\varphi)=\mathfrak{p}^{\alpha}_t\ast \varphi.
\end{equation}
See the survey paper \cite{Kwasnicki} for more details on the definition of the fractional Laplacian and its corresponding semigroup $e^{-t(-\Delta)^{\frac{\alpha}{2}}}$. See also  \cite[Sections 3.6--3.9]{Jacob}.  We gather in the lemma below some useful results associated to the kernel $\mathfrak{p}^{\alpha}_t$. 
\begin{Lemme}\label{LemmaPropertiesKernel} For $0<\alpha<2$ consider the kernel $\mathfrak{p}^{\alpha}_t$ associated to the operator $-(-\Delta)^{\frac{\alpha}{2}}$. We have the following properties:
\begin{itemize}
\item[(i)] For all $t>0$ we have $\|\mathfrak{p}^{\alpha}_t\|_{L^1}=1$,
\item[(ii)] For all $t>0$ and $s>0$ we have 
$$\|(Id-\Delta)^{\frac{s}{2}}\mathfrak{p}^{\alpha}_t\|_{L^1}\leq C\max\{1, t^{-\frac{s}{\alpha}}\},$$
for some constant $C>0$.
\end{itemize}
Recall that when $\alpha=2$, then the semigroup $e^{t\Delta}$ is the standard heat semigroup, for which all  previous results are also true.
\end{Lemme}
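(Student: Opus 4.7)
For part (i), I would first argue that $\mathfrak{p}^{\alpha}_t$ is a non-negative function. Since $0<\alpha<2$, the symbol $e^{-t|\xi|^{\alpha}}$ is the characteristic function of a rotationally invariant $\alpha$-stable Lévy distribution; equivalently, by subordination, one can write
\[
\mathfrak{p}^{\alpha}_t(x)=\int_0^\infty \mathfrak{p}^{2}_s(x)\,\eta^{\alpha/2}_t(s)\,ds,
\]
where $\mathfrak{p}^{2}_s$ is the Gaussian heat kernel and $\eta^{\alpha/2}_t$ is the density of a stable subordinator of index $\alpha/2$, both non-negative. Hence $\mathfrak{p}^{\alpha}_t\geq 0$. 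Then I would conclude
\[
\|\mathfrak{p}^{\alpha}_t\|_{L^1}=\int_{\mathbb{R}^n}\mathfrak{p}^{\alpha}_t(x)\,dx=\widehat{\mathfrak{p}^{\alpha}_t}(0)=e^{0}=1,
\]
referring to \cite{Kwasnicki,Jacob} for the probabilistic interpretation.

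For part (ii), I would split the argument according to whether $t\geq 1$ or $0<t<1$. In the easy regime $t\geq 1$, the semigroup property $\mathfrak{p}^{\alpha}_t=\mathfrak{p}^{\alpha}_1*\mathfrak{p}^{\alpha}_{t-1}$ together with part (i) yields, by Young's inequality,
\[
\|(Id-\Delta)^{s/2}\mathfrak{p}^{\alpha}_t\|_{L^1}\leq \|(Id-\Delta)^{s/2}\mathfrak{p}^{\alpha}_1\|_{L^1}\,\|\mathfrak{p}^{\alpha}_{t-1}\|_{L^1}=\|(Id-\Delta)^{s/2}\mathfrak{p}^{\alpha}_1\|_{L^1},
\]
which is the constant side of the $\max$. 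For $0<t<1$ I would exploit the self-similarity $\mathfrak{p}^{\alpha}_t(x)=t^{-n/\alpha}\mathfrak{p}^{\alpha}_1(t^{-1/\alpha}x)$: for the pure homogeneous operator a change of variables immediately gives the clean scaling
\[
\|(-\Delta)^{s/2}\mathfrak{p}^{\alpha}_t\|_{L^1}=t^{-s/\alpha}\,\|(-\Delta)^{s/2}\mathfrak{p}^{\alpha}_1\|_{L^1}.
\]
To pass from $(-\Delta)^{s/2}$ to the inhomogeneous $(Id-\Delta)^{s/2}$, I would use a Littlewood--Paley-style decomposition of the symbol $(1+|\xi|^2)^{s/2}$ based on a smooth cut-off $\phi$ supported near the origin: the low-frequency piece has compactly supported smooth Fourier transform and contributes $\mathcal F^{-1}[\phi(\xi)(1+|\xi|^2)^{s/2}]*\mathfrak{p}^{\alpha}_t$, whose $L^1$ norm is bounded by a constant times $\|\mathfrak{p}^{\alpha}_t\|_{L^1}=1$; the high-frequency piece factorises as $m(\xi)|\xi|^s e^{-t|\xi|^\alpha}$ where $m(\xi)=(1-\phi(\xi))(1+|\xi|^{-2})^{s/2}$ is a smooth bounded multiplier whose inverse Fourier transform lies in $L^1$ up to a $\delta$-term that just reproduces $(-\Delta)^{s/2}\mathfrak{p}^{\alpha}_t$. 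Combining with the homogeneous scaling gives the $t^{-s/\alpha}$ factor.

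The main technical obstacle will be to verify that $\|(Id-\Delta)^{s/2}\mathfrak{p}^{\alpha}_1\|_{L^1}$ and $\|(-\Delta)^{s/2}\mathfrak{p}^{\alpha}_1\|_{L^1}$ are genuinely finite, since these kernels are only implicit through their Fourier transforms $(1+|\xi|^2)^{s/2}e^{-|\xi|^{\alpha}}$ and $|\xi|^se^{-|\xi|^{\alpha}}$. Here I would use a weighted $L^2$ bound of the form
\[
\|f\|_{L^1}\leq C\,\|(1+|x|^2)^{N/2}f\|_{L^2}=C\,\|(Id-\Delta_\xi)^{N/2}\widehat f\,\|_{L^2},
\]
valid for any $N>n/2$ by Cauchy--Schwarz, and apply it to $f=(Id-\Delta)^{s/2}\mathfrak{p}^{\alpha}_1$. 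Smoothness of $|\xi|^{s}e^{-|\xi|^{\alpha}}$ away from the origin and the super-polynomial decay contributed by $e^{-|\xi|^\alpha}$ make the resulting $L^2$ integral finite. The case $\alpha=2$ is classical and can be handled either by the same argument or directly from Gaussian explicit computations.
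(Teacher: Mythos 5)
Your treatment of part (i) is fine and matches the paper's: both reduce to the fact that $\mathfrak{p}^\alpha_t$ is a probability density (the paper cites Kolokoltsov's formula for the stable density, you derive positivity by subordination to the Gaussian, which is valid precisely for $0<\alpha<2$). For part (ii) your overall architecture also parallels the paper's: the scaling identity $\|(-\Delta)^{s/2}\mathfrak{p}^\alpha_t\|_{L^1}=t^{-s/\alpha}\|(-\Delta)^{s/2}\mathfrak{p}^\alpha_1\|_{L^1}$ is exactly the paper's first step, and your semigroup argument for $t\geq1$ plus the Littlewood--Paley splitting of $(1+|\xi|^2)^{s/2}$ into a Schwartz low-frequency multiplier and a $\delta_0+L^1$ high-frequency multiplier are correct and in fact fill in the passage from the homogeneous to the inhomogeneous estimate that the paper only sketches ``for simplicity''.

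The gap is in the step you yourself flag as the main technical obstacle: proving $\|(-\Delta)^{s/2}\mathfrak{p}^\alpha_1\|_{L^1}<+\infty$. Your weighted-$L^2$ bound $\|f\|_{L^1}\leq C\|(Id-\Delta_\xi)^{N/2}\widehat f\|_{L^2}$ with $N>n/2$ requires $\widehat f(\xi)=|\xi|^s e^{-|\xi|^\alpha}$ to lie in $H^N(\mathbb{R}^n_\xi)$, and the obstruction is not at infinity (where your smoothness-plus-decay remark is correct) but \emph{at the origin}, which your argument does not address: both $|\xi|^s$ and $e^{-|\xi|^\alpha}$ are non-smooth at $\xi=0$ for non-integer $s$ and $0<\alpha<2$, and a localized copy of $|\xi|^s$ belongs to $H^N$ only for $N<s+\tfrac n2$. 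Thus the admissible window is $\tfrac n2<N<s+\tfrac n2$; it is nonempty for every $s>0$, but for small $s$ (which occurs in the paper, e.g.\ $s=1-\gamma$ in Case 2) it forces a fractional $N$ and a genuine estimate of the fractional Sobolev norm of $|\xi|^s e^{-|\xi|^\alpha}$ near the origin --- taking the first integer $N>n/2$, as one would naively do, makes the $L^2$ integral diverge there. So as written the finiteness claim is not established; it can be repaired, but the repair is the whole content of the step. The paper avoids the Fourier-side singularity altogether by writing $(-\Delta)^{s/2}\mathfrak{p}^\alpha_1=\frac{1}{\Gamma(k-s/2)}\int_0^{+\infty}\tau^{k-s/2-1}(-\Delta)^k(h_\tau\ast\mathfrak{p}^\alpha_1)\,d\tau$ with $k$ an integer, $k>s/2$, and then invoking the pointwise bounds $|\partial^m_x\mathfrak{p}^\alpha_1(x)|\leq C\min\{1,|x|^{-m}\}\mathfrak{p}^\alpha_1(x)$ from Kolokoltsov's book to get $\|(-\Delta)^k\mathfrak{p}^\alpha_1\|_{L^1}<+\infty$; you could either adopt that route or substitute for your Cauchy--Schwarz step a direct appeal to known spatial decay estimates for the stable density and its derivatives.
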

The first point of this lemma follows from the fact that $\mathfrak{p}^\alpha_t$ is given in \cite[formula (7.2), Chapter 7]{Kolokol}, which is a probability density. The second point can also be deduced from the general properties of the symmetric $\alpha$-stable semigroups given in the books \cite{Jacob} and \cite{Kolokol}, but for the sake of completeness, a sketch of the proof of this inequality is given in the appendix.\\ 

Next we introduce Sobolev spaces, for more details  see the book \cite[Chapter 6]{Grafakos2}. 
We define nonhomogeneous Sobolev spaces $H^s(\mathbb{R}^n)$ with $s\in \mathbb{R}$ (see in particular \cite[Definition 6.2.2.]{Grafakos2}) as the set of distributions $\varphi\in \mathcal{S}'(\mathbb{R}^n)$ such that the quantity 
\begin{equation}\label{eq:normH}
\|\varphi\|_{H^s}:=\|(Id-\Delta)^{\frac{s}{2}}\varphi\|_{L^2},
\end{equation} 
 is finite.
The expression in \eqref{eq:normH} defines a norm and the space $H^s(\mathbb{R}^n)$ endowed with this norm is a Banach space. 
If $s>0$ then the norm $\|f\|_{H^s}$ is equivalent to $\|f\|_{L^2}+\|(-\Delta)^{\frac{s}{2}}f\|_{L^2}$ (see \cite[Theorem 6.2.6.]{Grafakos2}). Note that we always have the inequality $\|(-\Delta)^{\frac{s}{2}}f\|_{L^2}\leq \|f\|_{H^s}$. The latter quantity  is actually the  norm in the homogeneous Sobolev space $\dot{H}^s(\mathbb{R}^n) $ and it is denoted by  $\|f\|_{\dot{H}^s} := \|(-\Delta)^{\frac{s}{2}}f\|_{L^2}$ (see \cite[Theorem 6.2.7]{Grafakos2}).
Note also that we have the space identification $H^0(\mathbb{R}^n)=L^2(\mathbb{R}^n)$ and that, for any $0<s_0, s_1$, we have the space inclusions
$$H^{s_0}\subset L^2\subset H^{-s_1}.$$
Note that negative regularity Sobolev spaces $H^{-s}(\mathbb{R}^n)$ can contain objects that are not necessarily functions, in particular if $s>\frac{n}{2}$ then the Dirac mass $\delta_0$ belongs to $H^{-s}(\mathbb{R}^n)$, {see \cite[Example 6.2.3]{Grafakos2}}.\\
    
{\bf Notation}: We will often use the function space $L^\infty([0, T_0], H^1(\mathbb{R}^n))$, which for brevity we sometimes denote by $L^\infty_t H^1_x$.

\subsection{Existence and Uniqueness}

In this paper we are mainly interested in {\em mild solutions} of the problem (\ref{EquationIntro}), which we introduce below. 
\begin{Definition} 
We say that  $\mathfrak{u}\in L^\infty_t H^1_x$ is a mild solution of \eqref{EquationIntro} if it is a solution of  the following integral equation
\begin{equation}\label{EquationFormulationIntegrale0}
\mathfrak{u}(t,x)=e^{-t(-\Delta)^{\frac{\alpha}{2}}}\mathfrak{u}_0(x)+\int_0^te^{-t(-\Delta)^{\frac{\alpha}{2}}}\left( \left((-\Delta)^{\frac{1}{2}}\mathfrak{u}\right)^2\ast \mathfrak{b}\right)(s,x)\, ds.
\end{equation}
\end{Definition}

 Our first main result  deals with the existence of such mild solutions.
\begin{Theoreme}[Existence and uniqueness]\label{Theo_Existence}
Let $(-\Delta)^{\frac{\alpha}{2}}$ be the fractional Laplacian operator with $0<\alpha\leq 2$, and let $\mathfrak{u}_0$ be a given initial data that belongs to the Sobolev space $H^1(\mathbb{R}^n)$. Furthermore, let us assume that we are in one of the following cases:
\begin{description}
\item[Case 1)] Let $1<\alpha\leq 2$ and let $\mathfrak{b} \in L^{\infty}([0,+\infty[, H^{-\gamma}(\mathbb{R}^n))$ with $0\leq \gamma<\alpha-1$. 
\item[Case 2)] Let $0<\alpha\leq 1$ and let $\mathfrak{b}\in L^{\infty}([0,+\infty[, H^{\gamma}(\mathbb{R}^n))$ with $0< 1-\alpha<\gamma<1$.  
\end{description}
 Then there exists a time $T_0>0$ such that the equation (\ref{EquationIntro}) admits a unique mild solution in the space $L^\infty([0, T_0], H^1(\mathbb{R}^n))$.
\end{Theoreme}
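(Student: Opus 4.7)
The plan is to set up a Banach fixed-point argument for the map
$$\Phi(\mathfrak{u})(t,\cdot) := e^{-t(-\Delta)^{\alpha/2}}\mathfrak{u}_0 + \int_0^t e^{-(t-s)(-\Delta)^{\alpha/2}}\Bigl[\bigl((-\Delta)^{1/2}\mathfrak{u}(s)\bigr)^2 \ast \mathfrak{b}(s)\Bigr]\, ds$$
on a closed ball $B_M := \{\mathfrak{u} \in L^\infty_t H^1_x : \|\mathfrak{u}\|_{L^\infty_t H^1_x} \leq M\}$, with radius $M$ of order $\|\mathfrak{u}_0\|_{H^1}$ and horizon $T_0$ to be fixed small. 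The linear piece $e^{-t(-\Delta)^{\alpha/2}}\mathfrak{u}_0$ is trivially bounded in $H^1$ by $\|\mathfrak{u}_0\|_{H^1}$, since the multiplier $e^{-t|\xi|^\alpha}$ has modulus $\leq 1$.

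The central analytic tool is a smoothing estimate derived from Lemma \ref{LemmaPropertiesKernel}. Since $e^{-t(-\Delta)^{\alpha/2}}$ acts as convolution with $\mathfrak{p}_t^\alpha$, Young's inequality combined with the identity $(Id-\Delta)^{s/2}\bigl(\mathfrak{p}_t^\alpha \ast g\bigr) = \bigl((Id-\Delta)^{(s-s')/2}\mathfrak{p}_t^\alpha\bigr) \ast \bigl((Id-\Delta)^{s'/2}g\bigr)$ and part $(ii)$ of the lemma yield
$$\|e^{-t(-\Delta)^{\alpha/2}}g\|_{H^s} \leq C\max\{1,\, t^{-(s-s')/\alpha}\}\,\|g\|_{H^{s'}}, \qquad s\geq s'.$$

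The nonlinear term must be controlled in $H^{s'}$ for some $s' \leq 1$. The elementary observation is that if $\mathfrak{u} \in H^1$, then $f := \bigl((-\Delta)^{1/2}\mathfrak{u}\bigr)^2 \in L^1$ with $\|f\|_{L^1} = \|\mathfrak{u}\|_{\dot H^1}^2 \leq \|\mathfrak{u}\|_{H^1}^2$, whence $\|\widehat f\|_{L^\infty} \leq \|\mathfrak{u}\|_{H^1}^2$. Passing to the Fourier side (where the convolution with $\mathfrak{b}$ becomes multiplication) gives, for every $s'\in\R$,
$$\|f \ast \mathfrak{b}\|_{H^{s'}}^2 = \int_{\R^n}(1+|\xi|^2)^{s'}|\widehat f(\xi)|^2|\widehat{\mathfrak{b}}(\xi)|^2\, d\xi \leq \|\mathfrak{u}\|_{H^1}^4\,\|\mathfrak{b}\|_{H^{s'}}^2.$$
In Case 1 I choose $s' = -\gamma$, using that $\mathfrak{b}\in L^\infty_t H^{-\gamma}_x$; in Case 2 I choose $s' = \gamma$, using that $\mathfrak{b}\in L^\infty_t H^{\gamma}_x$.

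Inserting these two estimates inside the Duhamel integral and bilinearising $f(\mathfrak{u}) - f(\mathfrak{v}) = \bigl((-\Delta)^{1/2}(\mathfrak{u}-\mathfrak{v})\bigr)\cdot\bigl((-\Delta)^{1/2}(\mathfrak{u}+\mathfrak{v})\bigr)$ (again $L^1$-controlled by Cauchy--Schwarz) produces
$$\|\Phi(\mathfrak{u})(t) - \Phi(\mathfrak{v})(t)\|_{H^1} \leq C\,\|\mathfrak{b}\|_{L^\infty_t H^{s'}_x}\bigl(\|\mathfrak{u}\|_{L^\infty_t H^1_x}+\|\mathfrak{v}\|_{L^\infty_t H^1_x}\bigr)\|\mathfrak{u}-\mathfrak{v}\|_{L^\infty_t H^1_x}\int_0^t\max\{1,(t-s)^{-\theta}\}\, ds,$$
with $\theta = (1+\gamma)/\alpha$ in Case 1 and $\theta = (1-\gamma)/\alpha$ in Case 2. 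The assumptions $\gamma < \alpha-1$ and $\gamma > 1-\alpha$ are designed exactly so that $\theta < 1$, ensuring that the Duhamel singularity at $s=t$ is integrable, with the time factor bounded by $C\, T_0^{1-\theta}$. Fixing first $M$ of the order of $\|\mathfrak{u}_0\|_{H^1}$ and then $T_0$ sufficiently small, one secures both stability $\Phi(B_M)\subset B_M$ and a strict contraction constant, and Banach's fixed-point theorem delivers the unique mild solution. The main obstacle — and the real reason for the dichotomy in the statement — is precisely this integrability constraint on $\theta$, which balances the smoothing power of $(-\Delta)^{\alpha/2}$ against the regularity deficit (Case 1) or surplus (Case 2) of the singular coefficient $\mathfrak{b}$.
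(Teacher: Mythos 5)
Your proposal is correct and follows essentially the same route as the paper: a Picard/Banach fixed-point argument in $L^\infty_t H^1_x$, with the quadratic term controlled in $L^1$ by Cauchy--Schwarz, the convolution with $\mathfrak{b}$ absorbed into an $H^{\mp\gamma}$ norm (your Plancherel computation is just Young's inequality $L^1\ast L^2\to L^2$ written on the Fourier side, which is how the paper phrases it), and the semigroup smoothing estimate of Lemma \ref{LemmaPropertiesKernel}(ii) producing exactly the integrability threshold $\theta<1$ that explains the conditions $\gamma<\alpha-1$ and $\gamma>1-\alpha$. The only cosmetic difference is that the paper invokes the abstract bilinear fixed-point theorem of \cite{PGLR2} with the condition $\delta<1/(4C_{\mathcal B})$ rather than iterating on a ball, which is an equivalent formulation.
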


The proof of Theorem \ref{Theo_Existence} is postponed to Section \ref{Secc_Existence}. Before moving on, some remarks on the assumptions on $\mathfrak b$ in this theorem are in order.

The singularity of the term $\mathfrak{b}(t,\cdot)$ (in the space variable) is  driven by the parameter $\gamma$, for which we impose a condition related to the smoothness degree $\alpha$ of the fractional Laplacian operator. 
If $1<\alpha\leq 2$ then $\mathfrak{b}(t,\cdot)$ can be quite singular (in fact since the exponent $-\gamma$ is negative, $\mathfrak b$ can be a distribution). On the other hand, if $0<\alpha \leq 1$ then  $\mathfrak{b}(t,\cdot)$ is not allowed to be a singular distribution and in fact it needs to be  Sobolev regular of order $\gamma$ (for some small but positive $\gamma$)  to ensure the existence of a mild solution. 
In some sense the   parameter $\gamma$ must  compensate for the weaker smoothing property of the kernel $\mathfrak{p}_t^\alpha$ when  $0<\alpha\leq 1$. We can see that if the function $\mathfrak{b}(t,\cdot)$ is more singular than what assumed in Theorem \ref{Theo_Existence}, that is if $-\gamma<-(\alpha -1)$ in Case 1) and if $\gamma<1-\alpha$ in Case 2), then the existence of such mild solutions is not  granted by this result. {We believe this is a hard threshold that cannot be overcome by using different techniques, unless one enhances the term $\mathfrak b$ with extra information and uses tools like {\em regularity structures} or {\em paracontrolled distributions}.} 

In terms of function spaces, we do not claim here any kind of optimality. For example, it should be possible to obtain   this existence theorem in a more general framework, by considering Triebel-Lizorkin spaces $F^{s}_{p,q}$ or Besov spaces $B^{s}_{p,q}$  for the space variable. Nevertheless, for the purpose of this article the space $L^\infty_tH^1_x$ is enough.   

\subsection{Blow-up}

Here we will see that under suitable assumptions it is possible to exhibit a blow-up phenomenon in finite time for the mild solution $\mathfrak u$ of equation (\ref{EquationIntro}).  

In order to explicit the blow-up time, we will work with a special initial data $\mathfrak{u}_0$ of the form $\mathfrak{u}_0= A \omega_0$, where $A$ is a positive constant that will be made explicit later and where $\omega_0:\mathbb{R}^n\longrightarrow \mathbb{R}$ is a function defined in the following way: Let $\xi_0\in \mathbb{R}^n$ be given by  $\xi_{0,1}=\xi_{0,2}=\ldots=\xi_{0,n}=\frac{3}{2}$. Then we define the function $\omega_0$ in the Fourier level by the condition:
\begin{equation}\label{eq_omega_0}
\widehat \omega_0 (\xi)= \mathds{1}_{\{|\xi-\xi_0|<\frac{1}{2}\}}.
\end{equation}
We can thus see that, since $\mathfrak{u}_0$ is bounded and compactly  supported in the Fourier variable, then it belongs to all Sobolev spaces $H^s$ for $0\leq s <+\infty$.\\

In Theorem \ref{Theo_BlowUp} below we show that, even though the initial data $\mathfrak{u}_{0}$ is a smooth function, the unique solution  found in  Theorem~\ref{Theo_Existence} blows up in finite time, provided that the initial condition has norm large enough. As before we will decompose our study following the values of the smoothness degree $\alpha$ and the corresponding assumptions on $\mathfrak b$. 
\begin{Theoreme}[Blow-up]\label{Theo_BlowUp}
Let us consider 
 the fractional Laplacian operator  $(-\Delta)^{\frac{\alpha}{2}}$ with $0<\alpha\leq 2$, and let  the initial condition $\mathfrak{u}_0$ be of the form 
\begin{equation}\label{Def_initialData}
\mathfrak{u}_0= A \omega_0,
\end{equation}
where   $\omega_0 $ is given by \eqref{eq_omega_0} and $A$ is a constant such that $A\geq e^{\ln(2)}  2^{5+n} $, where we recall that $n$ is the dimension of the space. For some parameter $\rho\geq 0$ such that $\rho + \alpha\leq 5n +2$, let the term $\mathfrak{b}$ be such that
\begin{equation}\label{ConditionBterm}
C_1(1+|\xi|^2)^{-\frac{\rho}{2}}\leq \widehat{\mathfrak b}(t, \xi),
\end{equation}
where the constant $C_1$ is  independent of $t$ and satisfies the bound 
$$\max\{1,2^{\frac{\rho}{2}-1}\}n^{\frac{\rho+\alpha}2} 2^{10n -1 +\rho+\alpha}\leq C_1<+\infty.$$ 
Furthermore, let us assume that we are in one of the following cases: 
\begin{description}
\item[Case 1)]  Let $1<\alpha\leq 2$ and let $\mathfrak{b} \in L^{\infty}([0,+\infty[, H^{-\gamma}(\mathbb{R}^n))$ with $0\leq \gamma<\alpha-1$. 
\item[Case 2)]  Let $0<\alpha\leq 1$ and let $\mathfrak{b} \in L^{\infty}([0,+\infty[, H^{\gamma}(\mathbb{R}^n))$ with $1-\alpha\leq \gamma<1$. 
\end{description} 
Then the mild solution $\mathfrak{u}$ of (\ref{EquationIntro}) obtained in Theorem \ref{Theo_Existence} blows up at (or before) time $t_*:=\tfrac{\ln (2)}{2^{\alpha}}$, in particular $\|\mathfrak{u}(t_*, \cdot)\|_{H^1_x} =+ \infty$.
\end{Theoreme}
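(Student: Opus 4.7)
The plan is to follow the Fourier-side positivity method of Montgomery--Smith \cite{Montgomery}. Applying the Fourier transform to the mild formulation \eqref{EquationFormulationIntegrale0} turns $(-\Delta)^{\alpha/2}$ into multiplication by the nonnegative symbol $|\xi|^\alpha$, the operator $(-\Delta)^{1/2}$ into multiplication by $|\xi|$, the square into a self-convolution, and the spatial convolution with $\mathfrak b$ into multiplication by $\widehat{\mathfrak b}(s,\xi)\ge C_1(1+|\xi|^2)^{-\rho/2}>0$ by \eqref{ConditionBterm}. Since $\widehat{\mathfrak u_0}=A\,\mathds{1}_{B(\xi_0,1/2)}\ge 0$, a short Picard iteration (or a continuity argument on $[0,T_0]$) shows that the unique mild solution supplied by Theorem~\ref{Theo_Existence} satisfies $\widehat{\mathfrak u}(t,\xi)\ge 0$ pointwise.

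\textbf{Iterated lower bound in Fourier.} Once this positivity is in hand, one drops the integral term of \eqref{EquationFormulationIntegrale0} to get the crude bound $\widehat{\mathfrak u}(t,\xi)\ge A\, e^{-t|\xi|^\alpha}\mathds 1_{B(\xi_0,1/2)}(\xi)$. Reinserting this into the integrand and using \eqref{ConditionBterm} produces a new lower bound whose spatial support is the sumset $B(\xi_0,1/2)+B(\xi_0,1/2)=B(2\xi_0,1)$, with amplitude controlled below by $A^{2}C_{1}$ multiplied by the two $|\xi|$-factors from the fractional gradient, the factor $(1+|\xi|^2)^{-\rho/2}$, the volume of the relevant intersection $B(\xi_0,1/2)\cap(\xi-B(\xi_0,1/2))$, and the time integral of the semigroup. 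Iterating $k$ times yields
\begin{equation*}
\widehat{\mathfrak u}(t,\xi)\ge a_k(t)\,\mathds{1}_{B_k}(\xi), \qquad B_k:=B\bigl(2^{k}\xi_0,\,2^{k-1}\bigr),
\end{equation*}
where the choice $\xi_0=(3/2,\ldots,3/2)$ with $|\xi_0|=3\sqrt n/2$ guarantees $2^{k}\le |\xi|\le C\sqrt n\,2^{k}$ uniformly on $B_k$ for every $n\ge 1$. The amplitudes satisfy a recursion of the schematic form
\begin{equation*}
a_{k+1}(t)\;\gtrsim\;\kappa_n\,C_1\,2^{k(2+n-\rho-\alpha)}\int_0^{t} e^{-(t-s)\sup_{B_{k+1}}|\xi|^\alpha}\,a_k(s)^{2}\,ds,
\end{equation*}
where one picks up $2^{2k}$ from the two $|\xi|$-factors, $2^{nk}$ from the convolution volume, $2^{-k\rho}$ from $\widehat{\mathfrak b}$, and $2^{-k\alpha}$ after evaluating the time integral.

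\textbf{Double-exponential growth at $t_*$.} The calibration $t_*=\ln(2)/2^\alpha$ is chosen so that $e^{-t_*|\xi|^\alpha}\ge 1/2$ at the base level (where $|\xi|^\alpha\lesssim 2^\alpha$) and so that, level by level, the exponential semigroup losses are matched by the growth factors in the recursion. Using the ansatz $a_k(t)=\lambda_k\,e^{-t\sup_{B_k}|\xi|^\alpha}$ the time integral evaluates explicitly and the recursion reduces to $\lambda_{k+1}\ge \gamma_k\,\lambda_k^{2}$ for an explicit sequence $\gamma_k$. The hypothesis $\rho+\alpha\le 5n+2$ together with the numerical thresholds $A\ge e^{\ln 2}\,2^{5+n}$ and $C_{1}\ge \max\{1,2^{\rho/2-1}\}\,n^{(\rho+\alpha)/2}\,2^{10n-1+\rho+\alpha}$ are precisely tuned so that this iteration produces $\lambda_k\ge \beta^{2^k}$ for some $\beta>1$. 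Finally, using disjoint annular shells $B_k\setminus B_{k-1}$ and $|\xi|^{2}\gtrsim 4^{k}$ on $B_k$, one concludes
\begin{equation*}
\|\mathfrak u(t_*,\cdot)\|_{H^1_x}^{2}\;\ge\; \int_{\R^n}|\xi|^{2}\,|\widehat{\mathfrak u}(t_*,\xi)|^{2}\,d\xi\;\ge\; \sum_{k\ge 0} 4^{k}\,|B_k|\,\lambda_k^{2}=+\infty,
\end{equation*}
contradicting $\mathfrak u\in L^\infty_tH^1_x$ and hence forcing blow-up at or before $t_*$.

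\textbf{Main obstacle.} The delicate part will be the bookkeeping of constants: the $2^{2k}$-gain from the two fractional gradients and the $2^{nk}$-gain from the convolution support must jointly dominate the $2^{k\rho}$-loss from $\widehat{\mathfrak b}$ and the $2^{k\alpha}$-loss from the time integral, while the exponential semigroup weights must factor through the ansatz in a way that closes the double-exponential iteration exactly at $t_*=\ln(2)/2^\alpha$. Matching this to the explicit numerical thresholds on $A$ and $C_1$ in the statement, so that the induction constant $\beta$ remains strictly above $1$ at every level, is where the bulk of the computation will lie.
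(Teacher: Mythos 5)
Your overall architecture is the same as the paper's (Fourier-side positivity via Picard iteration, an iterated lower bound on dyadic frequency blocks, and divergence of a lacunary series at $t_*$), but the central inductive step as you set it up would fail. You propagate bounds of the form $\widehat{\mathfrak u}(t,\xi)\ge a_k(t)\,\mathds 1_{B_k}(\xi)$ with $B_{k+1}=B_k+B_k$ the \emph{full} sumset ball. The self-convolution $\mathds 1_{B_k}\ast\mathds 1_{B_k}(\xi)=|B_k\cap(\xi-B_k)|$ vanishes continuously at $\partial(B_k+B_k)$, so there is no constant $c>0$ with $\mathds 1_{B_k}\ast\mathds 1_{B_k}\ge c\,\mathds 1_{B_{k+1}}$; the recursion for $a_{k+1}$ cannot be closed in the indicator form you wrote. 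The paper sidesteps this entirely by iterating the actual self-convolutions $\widehat\omega_k:=\widehat\omega_{k-1}\ast\widehat\omega_{k-1}$ (which are not indicators), tracking only their support (a dyadic hypercube/corona) and the exact identity $\|\widehat\omega_k\|_{L^1}=(v_n/2^n)^{2^k}$, and only at the very end converting $L^1$ into $L^2$ information via Cauchy--Schwarz on the corona. Your scheme is repairable by shrinking the radius of $B_{k+1}$ strictly below the sumset radius, but that changes all the geometric constants, and since the theorem asserts explicit numerical thresholds for $A$ and $C_1$ and an explicit time $t_*$, "the constants are precisely tuned" cannot be left as an assertion --- the tuning is the content.

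A second, related gap is in the endgame. Your ansatz $a_k(t)=\lambda_k e^{-t\sup_{B_k}|\xi|^\alpha}$ does not propagate through the recursion for general $\alpha$, because squaring gives the exponent $2\sup_{B_k}|\xi|^\alpha$ while the target level needs $\sup_{B_{k+1}}|\xi|^\alpha=2^\alpha\sup_{B_k}|\xi|^\alpha$, and these disagree whenever $\alpha\neq 1$. The paper instead builds the weight $e^{-t\,2^{k+\alpha}}$ (exponent \emph{linear} in $2^k$, independent of the geometry of the support) into $\Phi_k$, which is exactly what doubles correctly under squaring and what calibrates with $t_*=\ln(2)/2^\alpha$ through the elementary bound $1-e^{-t_*2^\alpha}=\tfrac12$. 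Finally, your divergence criterion $\sum_k 4^k|B_k|\lambda_k^2=+\infty$ silently drops the factor $e^{-2t_*\sup_{B_k}|\xi|^\alpha}$ from $a_k(t_*)^2$; that factor decays double-exponentially in $k$ and is precisely what the hypothesis $A\ge e^{\ln 2}2^{5+n}$ must defeat (the paper's series has general term $\bigl(A^2 e^{-t_*2^{\alpha+1}}2^{-10-2n}\bigr)^{2^k}$ up to harmless factors, and the threshold on $A$ makes the base exactly $1$). As written, the proposal hides the only genuinely delicate competition in the proof.
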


The proof is postponed to Section \ref{Secc_BlowUp}. We observe that, hidden in the assumptions we make, there is a relationship between the dimension
$n$ and the parameters $\rho$ and $\gamma$, as pointed out in the Remark below. 
\begin{Remarque}\label{RmDimension}
If we combine  assumption  \eqref{ConditionBterm} together with the fact that $\mathfrak b(t)$ must belong to a given fractional Sobolev space,  
then we get  a link between the parameters $\gamma, \rho$ and the dimension $n$. In particular:
\begin{description}
\item[Case 1)] Using  \eqref{ConditionBterm} we have 
\begin{align*}
\|\mathfrak b (t, \cdot)\|_{H^{-\gamma}} 
&= \|(Id-\Delta)^{\frac{-\gamma}2} \mathfrak b(t, \cdot) \|_{L^2} = \|(1+|\cdot|^2)^{\frac{-\gamma}2} \widehat{\mathfrak b}(t, \cdot)  \|_{L^2} \\
&\geq  C_1 \|(1+|\cdot|^2)^{\frac{-\gamma}2} (1+|\cdot|^2)^{\frac{-\rho}2} \|_{L^2}  = C_1\left( \int_{\R^n} \frac{1}{(1+|\xi|^2)^{-(\gamma+\rho) }}  d\xi\right)^{1/2}.
\end{align*}
Since we require $\|\mathfrak b (t, \cdot)\|_{H^{-\gamma}} <+\infty$ we must necessarily have $${\int_{\R^n}} \frac{1}{(1+|\xi|^2)^{-(\gamma+\rho) }}  d\xi <+\infty,$$ which is satisfied  only if $2(\gamma+\rho)>n\geq 1$.
\item[Case 2)] 
 In this case   one deduces analogously the condition $2(\rho-\gamma) >n\geq 1$.
\end{description}
 We note that in both cases we can freely choose $\rho$ large enough to reach any desired dimension~$n$, as long as $\rho+\alpha \leq 5n+2$ (i.e.~$\rho\leq 5n +2 -\alpha$ which can always be satisfied together with $\rho > \tfrac n2 -\gamma$ or $\rho > \tfrac n2 +\gamma$, respectively).
\end{Remarque}

We make a few comments on the meaning of the extra assumptions in Theorem \ref{Theo_BlowUp}, in particular on the choice of $\mathfrak u_0$ and on the restriction on $\mathfrak b$. An example of admissible $\mathfrak b$ is given in  Example \ref{ex} below.
\begin{Remarque}
\begin{itemize}
\item[]
\item The assumptions of Theorem \ref{Theo_Existence} are clearly satisfied, so we know that a (local) solution exists. Here we furthermore pick a special $\mathfrak u_0$ and impose an extra condition on the behaviour of $\widehat{\mathfrak b}$ at infinity.
\item The initial condition $\mathfrak u_0 = A \omega_0 $ is actually a smooth function because it belongs to all Sobolev spaces. The key point is that we choose it so that its norm is large enough (because we impose $A$ bigger than some given constant). Note that the constant $A$ is not optimal.
\item The extra condition on $\mathfrak b$ is expressed in terms of a  lower bound on its Fourier transform. The decay at infinity of the Fourier transform of a function/distribution is intimately related to its regularity.
Condition \eqref{ConditionBterm} in fact prohibits too much regularity for the function~$\mathfrak{b}$. This is required to show blow-up, and it amounts to ensure that we are taking an element in $H^{-\gamma}(\R^n)$ or $H^\gamma(\R^n)$ which is actually `singular', and does  not in fact belong to a (much) smoother space.  Note that the constant $C_1$ is not optimal.
\item One could also add an upper bound of the form 
\[\widehat{\mathfrak b}(t, \xi) \leq C_2(1+|\xi|^2)^{-\frac{\rho}{2}} ,\]
with $C_2>C_1$, which is a sufficient condition, together with  $2(\gamma+\rho)>n\geq 1$ and  $2(\gamma-\rho)>n\geq 1$ respectively, to ensure that the element $\mathfrak b$ does belong to the correct fractional Sobolev space. The upper bound on $\widehat{\mathfrak b}$ prevents growth at infinity (in particular the exponent $-\rho/2$ is required to be negative) hence restricting the `irregularity' of $\mathfrak b$. This condition is not necessary, and could be violated pointwise,  but the global behaviour of  $\widehat{\mathfrak b}(t, \xi)$ will be of this form if we are to ensure that  $\mathfrak b$ belongs to the given Sobolev space. See also the second bullet point in  Example \ref{ex} below for more details. 
\end{itemize}
\end{Remarque}

We conclude this section by giving two examples of 
 admissible $\mathfrak b$ that satisfy the hypothesis of Theorem \ref{Theo_Existence} and Theorem \ref{Theo_BlowUp}. 
\begin{Example}\label{ex}
Below we give two examples that are time-homogeneous, $\mathfrak b(t, \cdot) \equiv \mathfrak b(\cdot)$. If one wants a function of time too, it is enough to multiply them by some $f(t)>0$ which is bounded and  with $L^\infty$-norm smaller than or equal to 1. 
\begin{itemize}
\item In dimension $n=1$ and if $\frac32 <\alpha\leq 2$, we can consider $\mathfrak b$ to be the Dirac mass $\delta_0$ (multiplied by a constant $C\geq C_1$, where $C_1$ is given in Theorem \ref{Theo_BlowUp}). Indeed $\delta_0 \in H^{-\gamma}(\R^n) $ if and only if $\gamma > \frac n2$. This corresponds to choosing $\rho =0$. 
\item In dimension $n\geq 1$, let us fix any $C\geq C_1$ and $\rho$ such that $1+\frac n2<\rho\leq 5n$. Then we define $\mathfrak b$ via   its Fourier transform by \[\widehat{\mathfrak b}(\xi) = C (1+|\xi|^2)^{-\rho/2},\]
which formally gives  $\mathfrak b  = C \left ( (1+|\cdot|^2)^{-\rho/2} \right)^\vee$  and $b\in H^{-\gamma}$.
It is often not possible to calculate the explicit expression of $\mathfrak b$, but we know some of its properties. In particular we know that $\mathfrak b$ is a smooth function on $\R^n\setminus\{ 0\}$ and the (exploding) behaviour at 0 is determined by the relationship between $\rho$ and $n$, see \cite[Proposition 6.1.5]{Grafakos2}. Note that these examples are smoother than the Dirac delta, and nevertheless  we still obtain blow-up of the solution. 
\end{itemize}
\end{Example} 

\section{Existence and Uniqueness}\label{Secc_Existence}
In this section we present the proof of existence and uniqueness of a solution  (Theorem \ref{Theo_Existence}). 

\begin{proof}[Proof of Theorem \ref{Theo_Existence}]

The main idea is to apply a Banach contraction principle for quadratic equations in Banach spaces (see the book  \cite[Theorem 5.1]{PGLR2}) 
We will work  in the Banach space $L^\infty_t H^1_x$. To this aim, let us  rewrite Equation (\ref{EquationFormulationIntegrale0}) in the form
$$\mathcal{U}=\mathcal{U}_0+\mathcal{B}(\mathcal{U}, \mathcal{U}),$$
where $\mathcal U$ is $\mathfrak u(t,x)$, the first term on the RHS is given by $\mathcal{U}_0:=e^{-t(-\Delta)^{\frac{\alpha}{2}}}\mathfrak{u}_0(x)$  and the second term is a bilinear application  on $L_t^\infty H_x^1$  given by:
$$\displaystyle{\mathcal{B}(\mathcal{U}, \mathcal{V}):=\int_0^te^{-(t-s)(-\Delta)^{\frac{\alpha}{2}}}\left((-\Delta)^{\frac{1}{2}}\mathcal{U}\right)\left((-\Delta)^{\frac{1}{2}}\mathcal{V}\right)\ast \mathfrak{b}\,ds}.$$ 
We will prove the following estimates for the two terms
\begin{align}
\|\mathcal{U}_0\|_{L^\infty_tH^1_x}&\leq \delta \label{EstimatesIntroFixedPoint-a}\\
\|\mathcal{B}(\mathcal{U}, \mathcal{V})\|_{L^\infty_tH^1_x}&\leq C_{\mathcal{B}}\|\mathcal{U}\|_{L^\infty_tH^1_x}\|\mathcal{V}\|_{L^\infty_tH^1_x}, \label{EstimatesIntroFixedPoint-b}
\end{align}
for some positive constants $\delta, C_{\mathcal B}$.
Once these estimates are in place, we only need to show that 
\begin{equation}\label{RelationShipFixedPoint}
\delta < \frac{1}{4C_\mathcal{B}},
\end{equation}
to conclude that there exists a unique mild solution of (\ref{EquationIntro}) in the space $L^\infty_tH^1_x$ (following  \cite[Theorem 5.1]{PGLR2}).\\

The bounds on the  term  $\mathcal{U}_0(t,\cdot)$ to get  inequality  (\ref{EstimatesIntroFixedPoint-a}) are independent of the paramenter $\alpha$, so will hold for Case 1) and Case 2) of the Theorem.  
By the definition of the semigroup $e^{-t(-\Delta)^{\frac{\alpha}{2}}}$ in (\ref{Def_KernelSemigroup}) and the properties of its associated kernel $\mathfrak{p}^{\alpha}_t$ listed in Lemma \ref{LemmaPropertiesKernel} we have
\begin{eqnarray*}
\|\mathcal{U}_0(t,\cdot)\|_{H^1}&=&\left\|e^{-t(-\Delta)^{\frac{\alpha}{2}}}\mathfrak{u}_0\right\|_{H^1}\\
&\leq& \|\mathfrak{p}^{\alpha}_t\ast(Id-\Delta)^{\frac{1}{2}}\mathfrak{u}_0\|_{L^2}\\
&\leq&\|\mathfrak{p}^{\alpha}_t\|_{L^1}\|(Id-\Delta)^{\frac{1}{2}}\mathfrak{u}_0\|_{L^2}\leq \|\mathfrak{u}_0\|_{H^1},
\end{eqnarray*}
from which we deduce the inequality
\begin{equation}\label{Control01}
\underset{0<t\leq T_0}{\sup}\|\mathcal{U}_0(t,\cdot)\|_{H^1_x}\leq \|\mathfrak{u}_0\|_{H^1} =: \delta,
\end{equation}
and thus the control (\ref{EstimatesIntroFixedPoint-a}) is granted.\\

We now turn our attention to the estimate (\ref{EstimatesIntroFixedPoint-b}), for which a separate proof for each case  is required.

{\bf Case 1): $1<\alpha\leq 2$}. In this case we write (using the definition of Sobolev spaces $H^1$)
\begin{align*}
\|\mathcal{B} (\mathcal{U},  &\mathcal{V})\|_{L^\infty_tH^1_x}\\
= &\sup_{0<t\leq T_0}  \left\|\int_0^te^{-(t-s)(-\Delta)^{\frac{\alpha}{2}}}\left((-\Delta)^{\frac{1}{2}}\mathcal{U}(s,\cdot)\right)\left((-\Delta)^{\frac{1}{2}}\mathcal{V}(s,\cdot)\right) 
\ast \mathfrak{b}(s,\cdot)ds\right\|_{H^1}\\
\leq &\sup_{0<t\leq T_0}  \int_0^t \left\|e^{-(t-s)(-\Delta)^{\frac{\alpha}{2}}}\left((-\Delta)^{\frac{1}{2}}\mathcal{U}(s,\cdot)\right)\left((-\Delta)^{\frac{1}{2}}\mathcal{V}(s,\cdot)\right)  \ast \mathfrak{b}(s,\cdot)\right\|_{H^1} ds\\
=  & \sup_{0<t\leq T_0}  \int_0^t  \left\| (Id-\Delta)^{\frac12}e^{-(t-s)(-\Delta)^{\frac{\alpha}{2}}}\left((-\Delta)^{\frac{1}{2}}\mathcal{U}(s,\cdot)\right)\left((-\Delta)^{\frac{1}{2}}\mathcal{V}(s,\cdot)\right)  \ast \mathfrak{b}(s,\cdot)\right\|_{L^2} ds
\end{align*}
and then by the definition \eqref{Def_KernelSemigroup} for the semigroup,  by properties of the Bessel potential and by using Young inequalities for convolutions, we have  
\begin{align*}
\|\mathcal{B}(\mathcal{U}, \mathcal{V})\|_{L^\infty_tH^1_x}\leq & \underset{0<t\leq T_0}{\sup}\int_0^t\left\|(Id-\Delta)^{\frac{1+\gamma}{2}} \mathfrak{p}^{\alpha}_{t-s}\ast\right.\\
&\quad\left.\left((-\Delta)^{\frac{1}{2}}\mathcal{U}(s,\cdot)\right)\left((-\Delta)^{\frac{1}{2}}\mathcal{V}(s,\cdot)\right)\ast (Id-\Delta)^{-\frac{\gamma}{2}}\mathfrak{b}(s,\cdot)\right\|_{L^2}ds\\
\leq & \underset{0<t\leq T_0}{\sup}\int_0^t\left\|(Id-\Delta)^{\frac{1+\gamma}{2}}  \mathfrak{p}^{\alpha}_{t-s}\right\|_{L^1}\\
&\qquad\qquad\times\left\|\left((-\Delta)^{\frac{1}{2}}\mathcal{U}(s,\cdot)\right)\left((-\Delta)^{\frac{1}{2}}\mathcal{V}(s,\cdot)\right)\right\|_{L^1}\\
&\qquad\qquad\times \left\| (Id-\Delta)^{-\frac{\gamma}{2}}\mathfrak{b}(s,\cdot)\right\|_{L^2}ds. 
\end{align*}
Now by the properties of the kernel $\mathfrak{p}^{\alpha}_{t}$ stated in Lemma \ref{LemmaPropertiesKernel} and recalling that $\mathfrak{b}\in L^\infty_tH^{-\gamma}_x$ we can write
\begin{eqnarray*}
\|\mathcal{B}(\mathcal{U}, \mathcal{V})\|_{L^\infty_tH^1_x}&\leq&  C\underset{0<t\leq T_0}{\sup}\int_0^t\max\{1,\left(t-s\right)^{-\frac{1+\gamma}{\alpha}}\}\\
&&\qquad\times\left\|\left((-\Delta)^{\frac{1}{2}}\mathcal{U}(s,\cdot)\right)\left((-\Delta)^{\frac{1}{2}}\mathcal{V}(s,\cdot)\right)\right\|_{L^1}\left\|\mathfrak{b}(s,\cdot)\right\|_{H^{-\gamma}}ds\\
&\leq & C\|\mathfrak{b}\|_{L^\infty_tH^{-\gamma}_x}\;\underset{0<t\leq T_0}{\sup}\int_0^t\max\{1,\left(t-s\right)^{-\frac{1+\gamma}{\alpha}}\}\\
&&\qquad\times\left\|(-\Delta)^{\frac{1}{2}}\mathcal{U}(s,\cdot)\right\|_{L^2}\left\|(-\Delta)^{\frac{1}{2}}\mathcal{V}(s,\cdot)\right\|_{L^2}ds\\
&\leq& C\|\mathfrak{b}\|_{L^\infty_tH^{-\gamma}_x}\|\mathcal{U}\|_{L^\infty_t H^{1}_x} \|\mathcal{V}\|_{L^\infty_t H^{1}_x}\\
&& \underset{0<t\leq T_0}{\sup}\int_0^t\max\{1,\left(t-s\right)^{-\frac{1+\gamma}{\alpha}}\}ds,
\end{eqnarray*}
where the term $\left(t-s\right)^{-\frac{1+\gamma}{\alpha}}$ is integrable since $0\leq \gamma<\alpha-1$ and $1<\alpha\leq 2$, so we finally obtain
\begin{equation*}
\|\mathcal{B}(\mathcal{U}, \mathcal{V})\|_{L^\infty_tH^1_x}\leq C T_0^{1-\frac{1+\gamma}{\alpha}}\|\mathfrak{b}\|_{L^\infty_tH^{-\gamma}_x}\|\mathcal{U}\|_{L^\infty_t {H}^{1}_x} \|\mathcal{V}\|_{L^\infty_t {H}^{1}_x}
\end{equation*}
which is \eqref{EstimatesIntroFixedPoint-b} with \begin{equation}\label{CB1}
C_{\mathcal B} = C T_0^{1-\frac{1+\gamma}{\alpha}}\|\mathfrak{b}\|_{L^\infty_tH^{-\gamma}_x}.
\end{equation}

{\bf Case 2):}  $0<\alpha\leq 1$. To start with, we proceed similarly as for Case 1) and   we write
\begin{align*}
\|\mathcal{B}(\mathcal{U}, &\mathcal{V})\|_{L^\infty_tH^1_x}\\
&\leq \sup_{0<t\leq T_0} \int_0^t \left\| (Id-\Delta)^{\frac12}e^{-(t-s)(-\Delta)^{\frac{\alpha}{2}}}\left((-\Delta)^{\frac{1}{2}}\mathcal{U}(s,\cdot)\right)\left((-\Delta)^{\frac{1}{2}}\mathcal{V}(s,\cdot)\right)\ast\mathfrak{b}(s,\cdot)\right\|_{L^2} ds\\
&=  \sup_{0<t\leq T_0} \int_0^t \left\| (Id-\Delta)^{\frac12}\mathfrak{p}^\alpha_{t-s} \ast\left((-\Delta)^{\frac{1}{2}}\mathcal{U}(s,\cdot)\right)\left((-\Delta)^{\frac{1}{2}}\mathcal{V}(s,\cdot)\right)\ast\mathfrak{b}(s,\cdot)\right\|_{L^2} ds.
\end{align*} 
Note that, in this case, the regularity of the kernel 
$\mathfrak{p}^{\alpha}_{t-s}$ is critical in the space we are
 working with, since $\|(Id-\Delta)^{\frac{1}{2}}\mathfrak{p}^{\alpha}_{t-s}\|_{L^1}\leq C\max\{1, (t-s)^{-\frac{1}{\alpha}}\}.$ This is the reason why we have to consider 
 $\mathfrak{b}(s,\cdot) \in H^{\gamma}$ for some positive 
$\gamma$, in particular for  $1-\alpha<\gamma<1$. Indeed the idea is similar as Case 1), but here we multiply by $(Id-\Delta)^{\frac\gamma2}$ the term $\mathfrak b(s,\cdot)$ (which makes it belong to a more singular space) and so we can multiply by  $(Id-\Delta)^{-\frac\gamma2}$ the kernel $\mathfrak p^\alpha_{t}$, effectively giving it some more regularity (and integrability). We get
\begin{align*}
\|\mathcal{B}(\mathcal{U}, \mathcal{V})\|_{L^\infty_tH^1_x}
\leq & \underset{0<t\leq T_0}{\sup}\int_0^t\left\|(Id-\Delta)^{\frac{1-\gamma}{2}} \mathfrak{p}^{\alpha}_{t-s}\ast\right.\\
&\left.\left((-\Delta)^{\frac{1}{2}}\mathcal{U}(s,\cdot)\right)\left((-\Delta)^{\frac{1}{2}}\mathcal{V}(s,\cdot)\right)\ast (Id-\Delta)^{\frac{\gamma}{2}}\mathfrak{b}(s,\cdot)\right\|_{L^2}ds\\
\leq & \underset{0<t\leq T_0}{\sup}\int_0^t\left\|(Id-\Delta)^{\frac{1-\gamma}{2}} \mathfrak{p}^{\alpha}_{t-s}\right\|_{L^1}\\
 &\times \left\|\left((-\Delta)^{\frac{1}{2}}\mathcal{U}(s,\cdot)\right)\left((-\Delta)^{\frac{1}{2}}\mathcal{V}(s,\cdot)\right)\right\|_{L^1}\left\|(Id-\Delta)^{\frac{\gamma}{2}}\mathfrak{b}(s,\cdot)\right\|_{L^2}ds\\
\leq &C\underset{0<t\leq T_0}{\sup}\int_0^t\max\{1,\left(t-s\right)^{-\frac{(1-\gamma)}\alpha}\}\\
&\qquad\times\left\|\left((-\Delta)^{\frac{1}{2}}\mathcal{U}(s,\cdot)\right)\left((-\Delta)^{\frac{1}{2}}\mathcal{V}(s,\cdot)\right)\right\|_{L^1}\left\|\mathfrak{b}(s,\cdot)\right\|_{H^{\gamma}}ds,
\end{align*}
having used again Lemma \ref{LemmaPropertiesKernel}. Now since 
since $1-\alpha< \gamma<1$ by assumption, the term $\left(t-s\right)^{-\frac{(1-\gamma)}\alpha}$ is integrable and we obtain
\begin{equation*}
\|\mathcal{B}(\mathcal{U}, \mathcal{V})\|_{L^\infty_tH^1_x}\leq C T_0^{1-\frac{(1-\gamma)}\alpha}\|\mathfrak{b}\|_{L^\infty_tH^{\gamma}_x}\|\mathcal{U}\|_{L^\infty_t\dot{H}^{1}_x} \|\mathcal{V}\|_{L^\infty_t\dot{H}^{1}_x},
\end{equation*}
which is \eqref{EstimatesIntroFixedPoint-b} with 
 \begin{equation}\label{CB2}
 C_\mathcal B = C T_0^{1-\frac{(1-\gamma)}\alpha}\|\mathfrak{b}\|_{L^\infty_tH^{\gamma}_x}.
\end{equation} 
The proof is completed by choosing $T_0$ in \eqref{CB1} and \eqref{CB2} small enough to ensure \eqref{RelationShipFixedPoint}.
\end{proof}

\begin{Remarque}\label{rm:T0}
We observe that the existence (and uniqueness) of the solution is only local in time. Indeed combining the constraint on $\delta := \|\mathfrak{u}_0\|_{H^1}$ given by $\delta<1/(4C_{\mathcal B})$ (see  \eqref{RelationShipFixedPoint})  with the expressions for $C_{\mathcal B}$ in the two cases  \eqref{CB1} and \eqref{CB2}, we have that the initial condition must be small enough depending on $T_0$ and on the term $\mathfrak{b}$, in particular
 $$ \|\mathfrak{u}_0\|_{H^1}<\frac{1}{4C}
\begin{cases}
T_0^{-(1-\frac{1+\gamma}{\alpha})}\|\mathfrak{b}\|^{-1}_{L^\infty_tH^{-\gamma}_x}& \mbox{if } 1<\alpha\leq 2,\\[3mm]
T_0^{-(1-\frac{1-\gamma}{\alpha})}\|\mathfrak{b}\|^{-1}_{L^\infty_tH^{\gamma}_x}& \mbox{if } 0<\alpha\leq1.\\
\end{cases}
$$
Alternatively, one can choose arbitrarily the initial condition, but the time $T_0$ must then be small enough, depending on the size of the initial data $\mathfrak{u}_0$ and to the size of the term $\mathfrak{b}$:
\begin{equation*}
0<T_0<
\begin{cases}
\left(4C\|\mathfrak{u}_0\|_{H^1}\|\mathfrak{b}\|_{L^\infty_tH^{-\gamma}_x}\right)^{1-\frac{1+\gamma}{\alpha}} &  \mbox{if } 1<\alpha\leq 2,\\[3mm]
\left(4C\|\mathfrak{u}_0\|_{H^1}\|\mathfrak{b}\|_{L^\infty_tH^{-\gamma}_x}\right)^{1-\frac{1-\gamma}{\alpha}} &  \mbox{if } 0<\alpha\leq 1.
\end{cases}
\end{equation*}
\end{Remarque}

\section{Blow-up}\label{Secc_BlowUp}

In this Section we investigate the  time of explosion for the solution $\mathfrak{u}$, that is we prove Theorem~\ref{Theo_BlowUp}. To this aim we first state and prove some auxiliary results about certain properties of the solution~$\mathfrak u$.

Let $\mathfrak{u}$ be the unique mild solution to the equation (\ref{EquationIntro}) according  to Theorem \ref{Theo_Existence}. The solution exists   on the time interval $[0,T_0]$, where 
 the size of $T_0$ is related to the size the initial data $\mathfrak{u}_0$, see Remark \ref{rm:T0}. We denote by $T_{\max}$ the maximal time of existence of the solution, which may be infinite. Clearly $T_0\leq T_{\max}$. The first interesting property of the solution of equation (\ref{EquationIntro}) is related to its positivity in the Fourier variable. 

\begin{Proposition}[Positivity]\label{Propo_geq0}
Let the hypotheses of Theorem \ref{Theo_Existence} hold. Moreover let us assume that  $\widehat{\mathfrak{u}}_0(\xi) \geq 0$ and $\widehat{\mathfrak{b}}(t,\xi)\geq 0$. Then the unique mild solution $\mathfrak{u}$ of equation (\ref{EquationIntro}) satisfies $\widehat{\mathfrak{u}}(t,\xi)\geq 0$ for all $t\leq T_{\max}$. 
\end{Proposition}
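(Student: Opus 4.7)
The plan is to work on the Fourier side of the mild formulation and exhibit a Picard scheme whose every iterate has a non-negative Fourier transform; by uniqueness of the mild solution, the same property will then transfer to $\mathfrak{u}$ itself.

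First I would rewrite the integral equation \eqref{EquationFormulationIntegrale0} in the Fourier variable. Using \eqref{PositivitySemigroup}, the fact that Fourier turns spatial convolutions into products and pointwise products into convolutions, together with the identity $\widehat{(-\Delta)^{1/2}\mathfrak{u}}(\xi)=|\xi|\widehat{\mathfrak{u}}(\xi)$, one obtains (up to a normalisation constant that plays no role for sign considerations)
\begin{equation*}
\widehat{\mathfrak{u}}(t,\xi)=e^{-t|\xi|^{\alpha}}\widehat{\mathfrak{u}}_0(\xi)+\int_0^t e^{-(t-s)|\xi|^{\alpha}}\,\widehat{\mathfrak{b}}(s,\xi)\,\Big(\bigl(|\cdot|\widehat{\mathfrak{u}}(s,\cdot)\bigr)\ast\bigl(|\cdot|\widehat{\mathfrak{u}}(s,\cdot)\bigr)\Big)(\xi)\,ds.
\end{equation*}
Every elementary building block on the right-hand side, namely $e^{-(t-s)|\xi|^{\alpha}}$, the weight $|\xi|$, the initial datum $\widehat{\mathfrak{u}}_0$, and $\widehat{\mathfrak{b}}$, is non-negative by either general facts or our assumptions. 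The only non-trivial piece is the convolution, but a convolution of two non-negative functions is again non-negative. This is the structural reason why positivity should be preserved.

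To turn this observation into a proof I would rely on the Picard iteration underlying the proof of Theorem \ref{Theo_Existence}. Setting $\mathfrak{u}^{(0)}\equiv 0$ and
\begin{equation*}
\mathfrak{u}^{(k+1)}:=\mathcal{U}_0+\mathcal{B}\bigl(\mathfrak{u}^{(k)},\mathfrak{u}^{(k)}\bigr),\qquad k\geq 0,
\end{equation*}
the Banach contraction principle applied in Section \ref{Secc_Existence} gives $\mathfrak{u}^{(k)}\to \mathfrak{u}$ in $L^\infty_tH^1_x$ on the interval $[0,T_0]$. By Plancherel this forces $\widehat{\mathfrak{u}^{(k)}}(t,\cdot)\to\widehat{\mathfrak{u}}(t,\cdot)$ in the weighted space $L^2(\R^n;(1+|\xi|^2)d\xi)$ uniformly in $t$, and hence pointwise almost everywhere along a subsequence. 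An easy induction then shows $\widehat{\mathfrak{u}^{(k)}}(t,\xi)\geq 0$ for every $k$: the base case is $\widehat{\mathfrak{u}^{(1)}}(t,\xi)=e^{-t|\xi|^{\alpha}}\widehat{\mathfrak{u}}_0(\xi)\geq 0$, and the inductive step is exactly the non-negativity discussion above, applied to $\mathfrak{u}^{(k)}$ in place of $\mathfrak{u}$. Passing to the limit yields $\widehat{\mathfrak{u}}(t,\xi)\geq 0$ a.e.\ in $\xi$, for every $t\in[0,T_0]$.

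Finally, to cover the whole maximal interval $[0,T_{\max})$ I would iterate this reasoning. The solution at time $T_0$ inherits a non-negative Fourier transform and is admissible as a new initial datum (its $H^1$-norm is finite), while $\widehat{\mathfrak{b}}(t,\cdot)\geq 0$ holds throughout; the uniqueness part of Theorem \ref{Theo_Existence} ensures that the concatenated solution coincides with $\mathfrak{u}$, so positivity propagates. The main delicate point is not the sign analysis, which is structurally built into the Fourier-side formula, but rather the bookkeeping between strong $H^1$-convergence of the Picard iterates and the a.e.\ pointwise conclusion we want to draw about their Fourier transforms.
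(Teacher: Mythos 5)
Your proof follows essentially the same route as the paper's: pass to the Fourier side of the mild formulation, show by induction that every Picard iterate has a non-negative Fourier transform (since the semigroup symbol $e^{-(t-s)|\xi|^{\alpha}}$, the weight $|\xi|$, $\widehat{\mathfrak{u}}_0$, $\widehat{\mathfrak{b}}$, and the convolution of non-negative functions are all non-negative), and pass to the limit. You are in fact somewhat more careful than the paper about extracting a.e.\ pointwise convergence from the $L^\infty_t H^1_x$ convergence and about propagating positivity beyond $T_0$ up to $T_{\max}$, but the core argument is identical.
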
  
We observe that the assumptions in Theorem \ref{Theo_BlowUp} imply  the assumptions in Proposition \ref{Propo_geq0}. 

\begin{proof} 
Using the Picard iteration scheme, we know that the unique mild solution $\mathfrak{u}$ found in Theorem \ref{Theo_Existence} is the limit in $L_t^\infty H^{1}_x$   as $j\to +\infty$ of $\mathfrak{u}_j $, where 
$$\mathfrak{u}_j(t,x) :=e^{-t(-\Delta)^{\frac{\alpha}{2}}} \mathfrak{u}_0(x) + \int_0^te^{-(t-s)(-\Delta)^{\frac{\alpha}{2}}}\left( \left((-\Delta)^{\frac{1}{2}}\mathfrak{u}_{j-1}\right)^2\ast \mathfrak{b}\right)(s,x)\, ds, \quad \text{ for }j\geq1.$$
If we take the Fourier transform in the space variable of $\mathfrak{u}_j$  
and use the identity (\ref{PositivitySemigroup}) we have
\begin{equation*}
\widehat{\mathfrak u}_j(t,\xi)  
= e^{-t|\xi|^\alpha} \widehat{\mathfrak{u}}_0(\xi)
+ \int_0^t e^{-(t-s)|\xi|^\alpha} \left( \left(|\xi|\mathfrak{u}_{j-1}(s,\xi) \ast|\xi|\mathfrak{u}_{j-1} (s,\xi) \right)\widehat{\mathfrak{b}} (s,\xi) \right)\, ds.
\end{equation*}
Since by hypothesis we have $\widehat{\mathfrak{u}}_0(\xi) \geq 0$ and $\widehat{\mathfrak{b}}(t,\xi)\geq 0$, then the positivity of the right-hand side above carries on in the Picard iteration and  the limit $\mathfrak{u}$ satisfies $\widehat{\mathfrak{u}}(t,\xi)\geq 0$.
\end{proof}

We can see from Theorem \ref{Theo_BlowUp} that to show blow-up we need a specially chosen initial condition $\mathfrak u_0 =A \omega_0$, where $\omega_0 $ is defined in \eqref{eq_omega_0}. The proof of the blow-up will be done iteratively, and for this argument we need the following functions $\omega_k:\mathbb{R}^n\longrightarrow \mathbb{R}$  defined iteratively from $\omega_0$ by the condition:
\begin{equation}\label{eq_omega_n}
\widehat \omega_k(\xi) := \widehat \omega_{k-1}(\xi)\ast  \widehat \omega_{k-1}(\xi).
\end{equation}
These functions $\widehat\omega_k$  have  some useful properties which are collected in the following lemma. 
\begin{Lemme}\label{Lema_Omega}
For all $k\geq 0$ we have that 
\begin{itemize}
\item[(i)] the support of the  Fourier transform $\widehat \omega_k$ is contained in the corona 
$$\{\xi \in \mathbb{R}^n:\sqrt n 2^k< |\xi| < \sqrt n  2^{k+1} \},$$
 where $n$ is the dimension of the Euclidean space;
\item[(ii)] the $L^1$-norm of $\widehat\omega_k$ is given by 
$\|\widehat\omega_k\|_{L^1} = \left(\frac{v_n}{2^n}\right)^{2^k}$, where $v_n= \frac{\pi^{\frac{n}{2}}}{\Gamma(\frac{n}{2}+1)}$ is the volume of the $n$-dimensional unit ball.
 \end{itemize}
\end{Lemme}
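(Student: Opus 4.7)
The plan is to prove both statements simultaneously by induction on $k$, but strengthening (i) to a \emph{ball}-support statement which behaves well under Minkowski sums. Specifically, I would prove by induction that $\widehat\omega_k \geq 0$ and $\mathrm{supp}(\widehat\omega_k) \subset \overline{B(2^k \xi_0,\, 2^{k-1})}$ for $k\geq 1$, together with $\mathrm{supp}(\widehat\omega_0) \subset \overline{B(\xi_0,\, 1/2)}$. The corona inclusion in (i) is then extracted from this ball-support via the triangle inequality.

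For the base case $k=0$, the definition \eqref{eq_omega_0} directly gives that $\widehat\omega_0 = \mathds{1}_{B(\xi_0, 1/2)}$ is nonnegative with the claimed support and with $L^1$-norm equal to the Lebesgue measure of a ball of radius $1/2$, namely $v_n (1/2)^n = v_n/2^n$. Since $|\xi_0| = \tfrac{3}{2}\sqrt{n}$ and $n\geq 1$, the reverse triangle inequality yields $\sqrt n \leq \tfrac{3}{2}\sqrt n - \tfrac{1}{2} < |\xi| < \tfrac{3}{2}\sqrt n + \tfrac{1}{2} \leq 2\sqrt n$ for every $\xi$ in this ball, which is the required corona inclusion.

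For the inductive step, I would use \eqref{eq_omega_n}, namely $\widehat\omega_k = \widehat\omega_{k-1} * \widehat\omega_{k-1}$. Convolution preserves nonnegativity, and the support bound follows from $\mathrm{supp}(f*g)\subset \mathrm{supp}(f)+\mathrm{supp}(g)$ combined with the Minkowski identity $\overline{B(a,r)} + \overline{B(a,r)} = \overline{B(2a,2r)}$, giving $\mathrm{supp}(\widehat\omega_k) \subset \overline{B(2^k \xi_0,\, 2^{k-1})}$. For the $L^1$-norm, nonnegativity and Fubini's theorem give the multiplicative identity $\|\widehat\omega_{k-1} * \widehat\omega_{k-1}\|_{L^1} = \|\widehat\omega_{k-1}\|_{L^1}^2$, so by induction $\|\widehat\omega_k\|_{L^1} = \bigl((v_n/2^n)^{2^{k-1}}\bigr)^2 = (v_n/2^n)^{2^k}$. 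Finally, the corona inclusion at level $k$ follows from the triangle inequality applied to $\overline{B(2^k \xi_0, 2^{k-1})}$: using $|2^k \xi_0| = \tfrac{3}{2} 2^k \sqrt n$ and the elementary bounds $3\sqrt n - 1 \geq 2 \sqrt n$ and $3\sqrt n + 1 \leq 4 \sqrt n$ valid for $n\geq 1$, one gets $\sqrt n \, 2^k < |\xi| < \sqrt n \, 2^{k+1}$.

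The only mildly subtle point, and the step I would flag as the main obstacle, is the realization that one cannot naively induct on the corona statement of (i), because the Minkowski sum of two coronas is a ball of twice the outer radius and does not exclude small norms (cancellation can bring the sum close to the origin). The remedy is to exploit the specific geometric information encoded in \eqref{eq_omega_0}, namely that $\widehat\omega_0$ is supported in a ball safely away from the origin, and to carry this convex ball structure through the induction; the corona inclusion is then a clean consequence drawn at each level. Once this is in place, the remainder is book-keeping with standard properties of convolutions.
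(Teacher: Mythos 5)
Your proof is correct, and for part (i) it takes a genuinely different route from the paper. The paper also recognises that the corona statement cannot be inducted on directly, but its remedy is to strengthen it to a \emph{hypercube} support property, $\mathrm{supp}(\widehat\omega_k)\subset\{2^k<\xi_i<2^{k+1},\ \forall i\}$, which reduces the induction to a one-dimensional, component-wise interval computation ($(2^{k-1},2^k)+(2^{k-1},2^k)\subset(2^k,2^{k+1})$); the corona is then read off by comparing Euclidean norms of hypercube points. You instead carry a ball-support invariant $\mathrm{supp}(\widehat\omega_k)\subset\overline{B(2^k\xi_0,2^{k-1})}$ through the induction using $\mathrm{supp}(f\ast g)\subset\mathrm{supp}(f)+\mathrm{supp}(g)$ and the Minkowski identity for balls, and extract the corona by the triangle inequality; note that your ball is in fact contained in the paper's hypercube, so the two invariants describe essentially the same region. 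Each approach buys something: the paper's is elementary and avoids any geometry of Minkowski sums, while yours isolates the structural reason the induction closes (convexity and distance from the origin are preserved under self-convolution) and generalises more readily. For part (ii) your argument is essentially the paper's --- nonnegativity plus Tonelli giving $\|\widehat\omega_{k-1}\ast\widehat\omega_{k-1}\|_{L^1}=\|\widehat\omega_{k-1}\|_{L^1}^2$ --- only stated more directly, since the paper routes the same computation through the support sets before expanding back to all of $\R^n$. The only blemish, which you share with the paper, is the open-versus-closed boundary of the supports (for $n=1$ the closed support actually touches the boundary of the open corona), but this is immaterial for every subsequent use of the lemma.
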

\begin{proof}
Both properties can be seen by induction. 

\emph{(i)} We will  show by induction a slightly different support property, which implies the support property stated in the Lemma. In particular, we show that the support of $\widehat \omega_k$ is contained in the hypercube 
$$\{\xi\in \R^n : 2^k < \xi_i < 2^{k+1}, \forall i=1,\ldots,n \},$$ 
to which we refer below as `hypercube support property'.  
It is clear that if the support of $\widehat \omega_k$ is contained in the hypercube above, it is also contained in the corona  $\{\xi \in \mathbb{R}^n:\sqrt n 2^k< |\xi| < \sqrt n  2^{k+1} \}$, because the smallest Euclidean norm for $\xi$ in the hypercube is given by $|\xi| = \sqrt{\sum_{i=1}^n \xi_i^2} > \sqrt{\sum_{i=1}^n 2^{2k}} = \sqrt n 2^k $, and similarly for the largest we have $|\xi|< \sqrt n 2^{k+1}$.

Next we prove the hypercube support property.\\
{\em Initial step.} The hypercube support property is clearly true for $k=0$ from the definition of $\omega_0$ given in (\ref{eq_omega_0}) with the specific choice of $\xi_0$. In particular we have that for $\xi \in \text{supp} (\widehat \omega_0) = \{|\xi-\xi_0|<\frac12\}$ we have that each component of $\xi$ is such that $2^0<\xi_i<2^1$. \\
{\em Induction step.}  We will work with $n=1$ in the induction step, because the proof for $n>1$ can be done component-wise.\\ Let $k\geq1$ and assume that the hypercube support property holds for $k-1$, that is,  for $\xi\in\text{supp}(\widehat \omega_{k-1})$ (here $\xi\in\R$) then  $2^{k-1}<\xi<2^{k}$. Let us calculate the support of $\widehat\omega_k$. By definition we have
\begin{align}\label{omega}
\widehat\omega_k(\xi) &= \int_{\R} \widehat \omega_{k-1} (\eta) \widehat \omega_{k-1} (\xi-\eta) d\eta \notag \\
& = \int_\R  \mathds 1_{ \{2^{k-1}<\eta<2^{k}\}} \mathds 1_{\{ 2^{k-1}<\xi-\eta <2^{k}\}} \widehat \omega_{k-1} (\eta) \widehat \omega_{k-1} (\xi-\eta) d\eta.
\end{align} 
It is easy to check that 
\[
 \{2^{k-1}<\eta<2^{k}\} \cap \{ 2^{k-1}<\xi-\eta <2^{k}\} \subseteq  \{2^{k}<\xi<2^{k+1}\} ,
\]
because from the second set we have $2^{k-1}+\eta<\xi  <2^{k}+\eta$ and combining it with the first set we get $2^{k-1}+2^{k-1}<\xi  <2^{k}+2^{k}$. 
Therefore equation \eqref{omega} can be multiplied by $\mathds 1_{ \{2^{k}<\xi<2^{k+1}\}} $ without changing its value. Thus clearly supp$(\widehat \omega_k) \subseteq \{2^{k}<\xi<2^{k+1}\} $ as wanted. 

\emph{(ii)} Here we calculate the $L^1$-norm of $\widehat\omega_k$.
\\
{\em Initial step.} We set $k=0$ and we easily get
$$\|\widehat{\omega}_0\|_{L^1} = \int_{\{|\xi-\xi_0|<\frac{1}{2}\}}d \xi =|B(0, \tfrac{1}{2})|=\frac{v_n}{2^n}.$$\\
{\em Induction step.}  
 By the hypothesis of induction we assume that $\|\widehat\omega_{k-1}\|_{L^1} = \left(\frac{v_n}{2^n}\right)^{2^{k-1}}$, for some $k\geq1$. 
 Note that all  functions $\widehat{\omega}_{k}$ are positive. Then using the hypercube support property from part (i) and the definition of $\widehat{\omega}_{k}$ we have
\begin{align*} 
\| \widehat{\omega}_k\|_{L^1} 
&= \int_{\R^n} | \widehat \omega_k(\xi) |d\xi= \int_{\R^n} \widehat \omega_k(\xi) d\xi\\
 &= \int_{ \{2^{k}<\xi_i<2^{k+1}, \, \forall i\}} \int_{\R^n} \widehat{\omega}_{k-1} (\eta) \widehat{\omega}_{k-1}(\xi-\eta) d\eta \, d\xi\\
 &= \int_{ \{2^{k}<\xi_i<2^{k+1}, \, \forall i\}} \int_{ \{2^{k-1}<\eta_i<2^{k}, \, \forall i\}} \widehat{\omega}_{k-1} (\eta) \widehat{\omega}_{k-1}(\xi-\eta) d\eta \, d\xi\\
 &= \int_{ \{2^{k-1}<\eta_i<2^{k}, \, \forall i\}} \widehat{\omega}_{k-1} (\eta) \int_{ \{2^{k}<\xi_i<2^{k+1}, \, \forall i\}}   \widehat{\omega}_{k-1}(\xi-\eta) d\xi\ d\eta  \\
 &=\int_{ \{2^{k-1}<\eta_i<2^{k}, \, \forall i\}} \widehat{\omega}_{k-1} (\eta) \int_{\R^n}   \widehat{\omega}_{k-1}(\xi-\eta) d\xi\ d\eta, 
\end{align*}
having used the fact that given  $\eta_i\in (2^{k-1},2^k)$ and  $\xi_i-\eta_i\in (2^{k-1},2^k)$, then we automatically have $\xi_i\in (2^{k},2^{k+1})$ for all $i$. Thus the inner integral is the $L^1$-norm of $\widehat \omega_{k-1}$ and we get
\begin{align*}
\| \widehat{\omega}_k\|_{L^1} 
&=\int_{ \{2^{k-1}<\eta_i<2^{k}, \, \forall i\}} \widehat{\omega}_{k-1} (\eta) \|\widehat\omega_{k-1}\|_{L^1} d\eta\\
&= \|\widehat\omega_{k-1}\|_{L^1} \int_{ \R^n} \widehat{\omega}_{k-1} (\eta)  d\eta\\
&=\|\widehat\omega_{k-1}\|_{L^1}^2.
\end{align*}
Then we obtain $\|\widehat\omega_{k}\|_{L^1}= \left(\frac{v_n}{2^n}\right)^{2^{k-1} 2}=\left(\frac{v_n}{2^n}\right)^{2^{k}}$
as wanted. 
\end{proof}

The next result is  a key lower bound for the Fourier transform of the solution $\mathfrak{u}$ of  equation (\ref{EquationIntro}) associated to  initial data $\mathfrak{u}_0$. This lower bound makes use of the functions $\omega_k$ defined above and of another family of functions, $\Phi_k$, given by
\begin{equation}\label{eq_alpha}
\Phi_k(t) := e^{-t 2^{k+\alpha}} 2^{- 5 (2^k-1)}{2^{5nk}}.
\end{equation}

\begin{Proposition}[Lower bound]\label{Propo_LowerBound}
Let the assumptions from Theorem \ref{Theo_BlowUp} hold (in particular $\mathfrak{u}_0 = A \omega_0 $), and let $ \widehat \omega_k$ be defined as in Lemma \ref{Lema_Omega}, starting from $\omega_0 $. Let  $t_*=\tfrac{\ln (2)}{2^{\alpha}}$. 
Then the unique mild solution $\mathfrak{u}$ of equation (\ref{EquationIntro})  verifies the following lower bound for all $k\geq 0$
\begin{equation}\label{eq_lowerbound}
\widehat{\mathfrak{u}}(t,\xi) \geq A^{2^k} \Phi_k(t) \, \widehat \omega_k(\xi),
\end{equation}
for any $t\geq t_*$.
\end{Proposition}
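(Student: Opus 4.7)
The plan is to proceed by induction on $k$, working pointwise in the Fourier variable and exploiting positivity of all quantities involved. The starting point is the Fourier transform in $x$ of the mild formulation \eqref{EquationFormulationIntegrale0}:
\begin{equation*}
\widehat{\mathfrak{u}}(t,\xi) = e^{-t|\xi|^\alpha}\widehat{\mathfrak{u}}_0(\xi) + \int_0^t e^{-(t-s)|\xi|^\alpha} \bigl(|\cdot|\widehat{\mathfrak{u}}(s,\cdot) \ast |\cdot|\widehat{\mathfrak{u}}(s,\cdot)\bigr)(\xi)\, \widehat{\mathfrak{b}}(s,\xi)\, ds.
\end{equation*}
Since $\widehat{\omega}_0\geq 0$ and $\widehat{\mathfrak{b}}\geq C_1(1+|\cdot|^2)^{-\rho/2}\geq 0$, the hypotheses of Proposition \ref{Propo_geq0} are met, so both terms on the right-hand side are nonnegative.

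For the base case $k=0$, I would drop the nonlinear integral and use $\mathfrak{u}_0 = A\omega_0$ to obtain $\widehat{\mathfrak{u}}(t,\xi)\geq A\,e^{-t|\xi|^\alpha}\widehat{\omega}_0(\xi)$. On the support of $\widehat{\omega}_0$ one has $|\xi|<2\sqrt{n}$, hence $|\xi|^\alpha\leq n^{\alpha/2}2^\alpha$, which matches $\Phi_0(t)=e^{-t\,2^\alpha}$ (any residual $n^{\alpha/2}$-factor in the exponent being absorbed at the next iteration by the explicit lower bound imposed on $C_1$).

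For the induction step $k\to k+1$, I assume the bound at level $k$ and discard the linear part of the formulation. Using the induction hypothesis together with $|\xi|\geq \sqrt{n}\,2^k$ on the support of $\widehat{\omega}_k$ from Lemma \ref{Lema_Omega}(i), we get $|\xi|\widehat{\mathfrak{u}}(s,\xi)\geq A^{2^k}\Phi_k(s)\sqrt{n}\,2^k\widehat{\omega}_k(\xi)$. Convolving two copies of this estimate and invoking the very definition $\widehat{\omega}_{k+1}=\widehat{\omega}_k\ast\widehat{\omega}_k$ yields
\begin{equation*}
\bigl(|\cdot|\widehat{\mathfrak{u}}(s,\cdot)\ast|\cdot|\widehat{\mathfrak{u}}(s,\cdot)\bigr)(\xi)\geq A^{2^{k+1}}\Phi_k(s)^2\, n\, 2^{2k}\,\widehat{\omega}_{k+1}(\xi).
\end{equation*}
On the support of $\widehat{\omega}_{k+1}$, namely $|\xi|<\sqrt{n}\,2^{k+2}$, the upper bound $1+|\xi|^2\leq n\,2^{2k+5}$ gives $\widehat{\mathfrak{b}}(s,\xi)\geq C_1 n^{-\rho/2}2^{-(2k+5)\rho/2}$, and likewise $|\xi|^\alpha\leq n^{\alpha/2}2^{\alpha(k+2)}$. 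Plugging all of this in, the residual task is to evaluate (or lower bound) the integral $\int_0^t e^{-(t-s)|\xi|^\alpha}\Phi_k(s)^2\, ds$. Writing $\Phi_k(s)^2 = e^{-2s\, 2^{k+\alpha}}2^{-10(2^k-1)}2^{10nk}$ and, for instance, restricting the integration to $s\in[t/2,t]$, one recovers a factor of the shape $e^{-t\, 2^{k+1+\alpha}}$ multiplied by the collected constants.

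The last, and main, step is verifying that these constants assemble to give at least
\begin{equation*}
A^{2^{k+1}}\Phi_{k+1}(t) = A^{2^{k+1}}e^{-t\, 2^{k+1+\alpha}}2^{-5(2^{k+1}-1)}2^{5n(k+1)}.
\end{equation*}
The explicit hypotheses $C_1\geq \max\{1,2^{\rho/2-1}\}n^{(\rho+\alpha)/2}2^{10n-1+\rho+\alpha}$, $\rho+\alpha\leq 5n+2$ and $A\geq e^{\ln 2}\,2^{5+n}$ are calibrated precisely to absorb the surplus factors $n^{(\rho+\alpha)/2}$, $2^{\rho+\alpha}$, $2^{10n}$ and the various powers of $2^{2k}$ that accumulate in the induction step. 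The main obstacle is exactly this bookkeeping: one must keep close track of every $n$- and $k$-dependent constant so that the target $A^{2^{k+1}}\Phi_{k+1}$-bound really emerges; a secondary technical point is the careful handling of the time integral in order not to lose the exponent $-t\, 2^{k+1+\alpha}$ required by the definition of $\Phi_{k+1}$.
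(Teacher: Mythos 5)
Your strategy coincides with the paper's: a Fourier-side induction on $k$ starting from the mild formulation, positivity (Proposition \ref{Propo_geq0}) to discard the linear term in the induction step, the support properties of Lemma \ref{Lema_Omega} to turn $|\xi|\widehat{\omega}_k \ast |\xi|\widehat{\omega}_k$ into a multiple of $\widehat{\omega}_{k+1}$ and to control $(1+|\xi|^2)^{\rho/2}$ and $|\xi|^\alpha$ on the relevant corona, and finally the calibration of $C_1$ together with $\rho+\alpha\leq 5n+2$ to close the induction. Two points in your plan need repair, however. First, the base case: you rightly note that on $\text{supp}(\widehat{\omega}_0)$ one only has $|\xi|<2\sqrt n$, so the direct estimate yields $e^{-t|\xi|^\alpha}\geq e^{-tn^{\alpha/2}2^{\alpha}}$, which for $n\geq 2$ is weaker than the claimed $\Phi_0(t)=e^{-t2^{\alpha}}$. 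But your proposed remedy --- absorbing the residual $n^{\alpha/2}$ ``at the next iteration'' via $C_1$ --- cannot work: the deficit is the factor $e^{-t2^{\alpha}(n^{\alpha/2}-1)}$, which is not a constant (it tends to $0$ as $t\to\infty$), and in any case the inequality at level $k=0$ must hold as stated before it can feed the step $k=0\to1$. The consistent fix is to carry $n^{\alpha/2}$ inside the exponential of $\Phi_k$; the induction step then reproduces exactly that modified exponent, at the price of a slightly larger admissible $A$ in Theorem \ref{Theo_BlowUp}. (The paper's own initial step sidesteps this by asserting $\text{supp}(\widehat{\omega}_0)\subset\{1<|\xi|<2\}$, which is accurate only for $n=1$.)

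Second, the time integral. Restricting to $s\in[t/2,t]$ is workable only if executed carefully: you must integrate $e^{-(t-s)|\xi|^{\alpha}}$ exactly, obtaining $\bigl(1-e^{-(t/2)|\xi|^{\alpha}}\bigr)/|\xi|^{\alpha}$, and then use $t\geq t_*$ together with $|\xi|>\sqrt n\,2^{k+1}$ on $\text{supp}(\widehat{\omega}_{k+1})$ to bound $1-e^{-(t/2)|\xi|^{\alpha}}$ below by a universal constant. If instead you bound the integrand pointwise at $s=t/2$ you are left with a factor $e^{-(t/2)|\xi|^{\alpha}}$, which on $\text{supp}(\widehat{\omega}_{k+1})$ decays like $e^{-c\,2^{\alpha k}}$ and destroys the induction. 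The paper avoids the splitting altogether: it pulls $e^{-2s\,2^{k+\alpha}}\geq e^{-2t\,2^{k+\alpha}}=e^{-t\,2^{k+1+\alpha}}$ out over all of $[0,t]$, which is precisely the exponential required by $\Phi_{k+1}(t)$, and then computes $\int_0^t e^{-(t-s)|\xi|^{\alpha}}\,ds$ exactly, using $t\geq t_*$ to extract the factor $\tfrac12$. Finally, a small misattribution: the hypothesis $A\geq e^{\ln 2}2^{5+n}$ plays no role in this proposition --- $A^{2^k}$ simply propagates through the induction --- and is only needed later to make the series in the proof of Theorem \ref{Theo_BlowUp} diverge. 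The remaining ``bookkeeping'' you defer is indeed exactly what the paper carries out, and it does close under the stated constraints on $C_1$ and $\rho+\alpha$.
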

\begin{proof}
In order to prove the inequality (\ref{eq_lowerbound}) we will first derive a general lower bound which will be used later on. Using the mild formulation \eqref{EquationFormulationIntegrale0} and recalling the fact that $\widehat{\mathfrak{p}^\alpha_t}(\xi) =  e^{-t|\xi|^\alpha}$ by identity (\ref{PositivitySemigroup}), and the fact that $\widehat{ (-\Delta)^{\frac12} \mathfrak{u}}(t,\xi) = |\xi| \widehat{\mathfrak{u}}(t,\xi)$, we have for all $t\geq0$, and in particular for all $t\geq t_*$, that
\begin{eqnarray}\label{FirstStep0LowerBound}
\widehat{\mathfrak{u}}(t,\xi)&=&\widehat{\mathfrak{p}^\alpha_t}(\xi)\widehat{\mathfrak{u}}_0(\xi)+\int_0^t\widehat{\mathfrak{p}^\alpha_{t-s}}(\xi)\left(|\xi| \widehat{\mathfrak{u}}(s, \xi)\ast  |\xi| \widehat{\mathfrak{u}}(s, \xi)\right) \widehat{\mathfrak{b}}(s,\xi) \, ds\notag\\
&= & e^{-t|\xi|^\alpha}\widehat{\mathfrak{u}}_0(\xi) +\int_0^t e^{-(t-s)|\xi|^\alpha} \left(|\xi| \widehat{\mathfrak{u}}(s, \xi)\ast  |\xi| \widehat{\mathfrak{u}}(s, \xi)\right) \widehat{\mathfrak{b}}(s,\xi)\, ds.
\end{eqnarray}
We now proceed  to show \eqref{eq_lowerbound} by induction.\\
{\em Initial Step.} We set $k=0$. Note that by assumption we have $\widehat{\mathfrak b}(t, \cdot)\geq 0$ and since we have $\widehat{\mathfrak{u}}_0(\xi) = A \widehat{\omega}_0(\xi)\geq 0$, so by the positivity property stated in Proposition \ref{Propo_geq0}, we have $\widehat{\mathfrak{u}}(t,\xi)\geq 0$ for all $t\geq 0$ and thus all the terms inside the integral on the right-hand side of (\ref{FirstStep0LowerBound}) are positive. Thus we can write
$\widehat{\mathfrak{u}}(t,\xi)\geq e^{-t|\xi|^\alpha}\widehat{\mathfrak{u}}_0(\xi).$
Now we use the definition of $\Phi_0$ given in (\ref{eq_alpha}) and of $\omega_0$ given in (\ref{eq_omega_0}),  $\widehat{\mathfrak u}_0 = A \widehat\omega_0 $, together with the fact that  supp$(\widehat\omega_0)\subset\{1<|\xi|<2\}$ to get
\begin{align*}
\widehat{\mathfrak{u}}(t, \xi) 
&\geq e^{-t |\xi|^\alpha} \widehat{\mathfrak{u}}_0(\xi)=  e^{-t |\xi|^\alpha}  A\widehat \omega_0 (\xi)\\
&\geq A e^{-t 2^\alpha}  \widehat \omega_0(\xi) = A \Phi_0(t) \, \widehat{\omega}_0(\xi),
\end{align*}
which is (\ref{eq_lowerbound}) for $k=0$.
\\
{\em Induction step.} Let $k\geq 1$. Consider $t\geq t_*$ and assume that the inequality \eqref{eq_lowerbound} holds for  $k-1$, that is  $\widehat{\mathfrak{u}}(t,\xi) \geq A^{2^{k-1}} \Phi_{k-1}(t) \, \widehat \omega_{k-1}(\xi)$ . Since we have $\widehat{\mathfrak{u}}_0(\xi)\geq 0$, by the lower bound \eqref{FirstStep0LowerBound} we get
\begin{eqnarray}
\widehat{\mathfrak{u}}(t, \xi) 
&\geq&  \int_0^t e^{-(t-s)|\xi|^\alpha}  \left(|\xi| \widehat{\mathfrak{u}}(s, \xi)\ast  |\xi| \widehat{\mathfrak{u}}(s, \xi)\right)  \widehat{\mathfrak{b}}(s,\xi) \, ds\notag \\
&\geq&  \int_0^t e^{-(t-s)|\xi|^\alpha}  \left(|\xi| A^{2^{k-1}} \Phi_{k-1}(s) \, \widehat \omega_{k-1}(\xi) \ast  |\xi| A^{2^{k-1}} \Phi_{k-1}(s) \, \widehat \omega_{k-1}(\xi)\right) \widehat{\mathfrak{b}}(s,\xi) \, ds\notag\\
& \geq &A^{2^{k}}\int_0^t e^{-(t-s)|\xi|^\alpha} \Phi_{k-1}^2(s)  \left(|\xi|  \widehat{\omega}_{k-1}(\xi) \ast  |\xi|  \widehat{\omega}_{k-1}(\xi)  \right) \widehat{\mathfrak{b}}(s,\xi)\, ds. \notag \\ \label{EstimateBeforeB}
& \geq & C_1 A^{2^{k}}\int_0^t e^{-(t-s)|\xi|^\alpha} \Phi_{k-1}^2(s)\left(|\xi|  \widehat{\omega}_{k-1}(\xi) \ast  |\xi|  \widehat{\omega}_{k-1}(\xi) \right)  (1+|\xi|^2)^{-\frac{\rho}{2}} ds,
\end{eqnarray}
having used in the last inequality the lower bound $ C_1(1+|\xi|^2)^{-\frac{\rho}{2}}\leq \widehat{\mathfrak b}(t, \xi) $ assumed in  \eqref{ConditionBterm}. 
Recall now that the support of the functions $\widehat{\omega}_{k-1}(\xi)$ is contained in $ \{\sqrt n 2^{k-1}<|\xi|< \sqrt n 2^{k}\}$ by Lemma \ref{Lema_Omega} part (i), and in particular one has $2^{k-1}<|\xi|$ if $\xi\in\text{supp}(\widehat\omega_{k-1})$. Using this bound and the expression  $\widehat{\omega}_{k}=\widehat{\omega}_{k-1}\ast \widehat{\omega}_{k-1}$ we have
\begin{eqnarray*}
|\xi|  \widehat{\omega}_{k-1}(\xi) \ast  |\xi|  \widehat{\omega}_{k-1}(\xi)&=&\int_{\mathbb{R}^n}|\xi-\eta|\widehat{\omega}_{k-1}(\xi-\eta) |\eta|\widehat{\omega}_{k-1}(\eta)d\eta\\
&> &\int_{\mathbb{R}^n}  2^{k-1} \widehat{\omega}_{k-1}(\xi-\eta)  2^{k-1}\widehat{\omega}_{k-1}(\eta) d\eta=  2^{2(k-1)}\widehat{\omega}_{k}(\xi), 
\end{eqnarray*}
and thus from \eqref{EstimateBeforeB} we obtain
\begin{equation}\label{EqLowerBoundInductive1}
\widehat{\mathfrak{u}}(t, \xi) \geq C_1A^{2^{k}}\int_0^t e^{-(t-s)|\xi|^\alpha} \Phi_{k-1}^2(s) 2^{2(k-1)}\widehat{\omega}_{k}(\xi) (1+|\xi|^2)^{-\frac{\rho}{2}} ds. 
\end{equation}
Thanks to the support property from Lemma \ref{Lema_Omega}, for $\xi \in \text{supp}(\widehat \omega_k)$  we have $ |\xi|< \sqrt n 2^{k+1}$.  Since by assumption $\rho\geq 0$, we get 
$$(1+|\xi|^2)^{\frac \rho 2} \leq  \max\{1,2^{\frac{\rho}{2}-1}\} (1+|\xi|^\rho)  \leq  \max\{1,2^{\frac{\rho}{2}-1}\} (1+ n^{\frac\rho 2} 2^{(k+1)\rho})  \leq  \max\{1,2^{\frac{\rho}{2}-1}\} n^{\frac\rho2} 2^{(k+1)\rho+1}  .$$ 
Thus we can write 
\begin{align}\label{InequalityInside1}
\widehat{\omega}_{k}(\xi)(1+|\xi|^2)^{-\frac{\rho}{2}} 
& \geq  \widehat{\omega}_{k}(\xi)\frac{ n^{-\frac\rho2} 2^{-(k+1)\rho -1}}{  \max\{1,2^{\frac{\rho}{2}-1}\}}  ,
\end{align}
and plugging this lower bound into \eqref{EqLowerBoundInductive1} together  with the explicit expression for $\Phi_k$ given in \eqref{eq_alpha}, we obtain 
\begin{align}
&\widehat{\mathfrak{u}}(t, \xi) \geq C_1A^{2^{k}}\int_0^t e^{-(t-s)|\xi|^\alpha} \Phi_{k-1}^2(s) 2^{2(k-1)}   \widehat{\omega}_{k}(\xi) \frac{n^{-\frac\rho2}  2^{ -(k+1)\rho-1}}{ \max\{1,2^{\frac{\rho}{2}-1}\}}   ds\nonumber \\
&\geq   \frac{C_1 n^{-\frac\rho2}}{ \max\{1,2^{\frac{\rho}{2}-1}\} }    2^{-1}  2^{2(k-1)-\rho(k+1)}  A^{2^{k}} \int_0^t e^{-(t-s)|\xi|^\alpha} \widehat{\omega}_{k}(\xi)  \Phi_{k-1}^2(s) ds\nonumber\\ 
&\geq \frac{C_1 n^{-\frac\rho2}}{  \max\{1,2^{\frac{\rho}{2}-1}\}}    2^{-1}  2^{2(k-1)-\rho(k+1)}  A^{2^{k}} \int_0^t e^{-(t-s)|\xi|^\alpha} \widehat{\omega}_{k}(\xi) \left(   e^{-s 2^{{(k-1)}+\alpha}} 2^{- 5 (2^{{(k-1)}}-1)}  2^{5n(k-1)} \right)^2  ds \nonumber\\
&\geq \frac{C_1 n^{-\frac\rho2}}{ \max\{1,2^{\frac{\rho}{2}-1}\}} 2^{-1}  2^{2(k-1)-\rho(k+1)}  A^{2^{k}}  e^{-2 t 2^{{(k-1)}+\alpha}} 2^{- 10 (2^{{(k-1)}}-1)} 2^{10n(k-1)}\notag\\
& \qquad \times \int_0^t e^{-(t-s)|\xi|^\alpha} \widehat{\omega}_{k}(\xi) ds. \label{eq_int1} 
\end{align}
We now use again  the support property for $\widehat \omega_k$,  so that in the integral above we have $|\xi| <\sqrt n 2^{k+1}$ and the integral can be bounded from below by 
\[
\int_0^t e^{-(t-s)|\xi|^\alpha} \widehat{\omega}_{k}(\xi)     ds
 \geq \int_0^t e^{-(t-s) n^{\frac \alpha2} 2^{\alpha(k+1)}} \widehat{\omega}_{k}(\xi) ds 
= \widehat{\omega}_{k}(\xi)  n^{-\frac \alpha2}  2^{-\alpha(k+1)}  (1- e^{-t n^{\frac \alpha2} 2^{\alpha(k+1)}}).
\]
At this point we observe that, thanks to the choice of $t_*=\tfrac{\ln (2)}{2^{\alpha}}$ and since  $\alpha>0$ we have that  
$$(1- e^{-t n^{{\frac{\alpha}{2}}}2^{\alpha(k+1)}}) \geq (1- e^{-t n^{{\frac{\alpha}{2}}}2^{\alpha}}) \geq (1- e^{-t 2^{\alpha }}) \geq (1- e^{-t_* 2^{\alpha }}) = \tfrac12,$$
 for all $t\geq t_*$ and for all  $k\geq0$, so that the integral above is in fact bounded by 
\[
\int_0^t e^{-(t-s)|\xi|^\alpha} \widehat{\omega}_{k}(\xi)     ds
 \geq \widehat{\omega}_{k}(\xi)  n^{-\frac \alpha2}  2^{-\alpha(k+1)}  2^{-1}.
\]
Plugging this into \eqref{eq_int1} and doing some algebra we get
\begin{align*}
&\widehat{\mathfrak{u}}(t, \xi)\\
&\geq  \frac{C_1 n^{-\frac\rho2}}{ \max\{1,2^{\frac{\rho}{2}-1}\}}    2^{-1}  2^{2(k-1)-\rho(k+1)}  A^{2^{k}}  e^{-2 t 2^{{(k-1)}+\alpha}} 2^{- 10 (2^{(k-1)}-1)}  2^{10n(k-1)} \widehat{\omega}_{k}(\xi)  n^{-\frac \alpha2}  2^{-\alpha(k+1)} 2^{-1}\\
&= \frac{C_1 n^{-\frac{\rho+\alpha}2}}{ \max\{1,2^{\frac{\rho}{2}-1}\}}   2^{-1}  2^{(k+1)(2-\rho-\alpha)} 2^{10n(k-1)} A^{2^{k}}  e^{- t 2^{k+\alpha}} 2^{- 5 (2^{{k}}-1)} \widehat{\omega}_{k}(\xi)\\
&= \frac{C_1 n^{-\frac{\rho+\alpha}2}}{\max\{1,2^{\frac{\rho}{2}-1}\}}  2^{-1-10n+2-\rho-\alpha} A^{2^{k}}  e^{- t 2^{k+\alpha}} 2^{- 5 (2^{{k}}-1)} 2^{k(10n+2-\rho-\alpha)} \widehat{\omega}_{k}(\xi).
\end{align*}
We remark now that by hypothesis we have $0\leq \rho+\alpha\leq 5n+2$ thus we have $10n + 2-\rho-\alpha\geq 5n$ and we obtain 
\begin{align*}
\widehat{\mathfrak{u}}(t, \xi)&\geq \frac{C_1 n^{-\frac{\rho+\alpha}2}}{ \max\{1,2^{\frac{\rho}{2}-1}\}}  2^{1-10n-\rho-\alpha} A^{2^{k}}  e^{- t 2^{k+\alpha}} 2^{- 5 (2^{{k}}-1)} 2^{5nk} \widehat{\omega}_{k}(\xi)\\
&\geq A^{2^{k}}  \Phi_k(t) \widehat{\omega}_{k}(\xi),
\end{align*}
where in the last line we used the hypothesis that $C_1\geq n^{\frac{\rho+\alpha}2}2^{10n-1+\rho+\alpha}\max\{1,2^{\frac{\rho}{2}-1}\}$.
\end{proof}

Using the tools and results above, we can now prove blow-up, that is we can prove Theorem~\ref{Theo_BlowUp}. 

\begin{proof}[Proof of Theorem \ref{Theo_BlowUp}]
We prove Case 1) and Case 2) together because the specific values of the parameters $\alpha$ and $\gamma$ do not play a role here. \\
Let  $t=t_*$. First note that $\|\mathfrak{u}(t_*, \cdot)\|_{H^1_x} \geq \|\mathfrak{u}(t_*, \cdot)\|_{\dot H^1_x} $. Therefore it is enough to show that the $\dot H^1_x$-norm explodes at $t_*$. Using the definition of the $\dot H^1_x$-norm and the Plancherel theorem we have
\begin{align*}
\|\mathfrak{u}(t_*,\cdot)\|^2_{\dot H^1_x} & = \|(-\Delta)^\frac12 \mathfrak{u}(t_*, \cdot)\|^2_{L^2}= \int_{\mathbb{R}^n} |\xi|^2 |\widehat{\mathfrak{u}}(t_*, \xi)|^2 \mathrm d\xi\\
&\geq \sum_{k=0}^{+\infty}  \int_{\{\sqrt n 2^k <|\xi|< \sqrt n 2^{k+1}\}} |\xi|^2 |\widehat{\mathfrak{u}}(t_*, \xi)|^2 \mathrm d\xi
 \end{align*}
and by Proposition \ref{Propo_LowerBound} and the definition of the functions $\Phi_k$ we have
\begin{align}
\nonumber
\|\mathfrak{u}(t_*,\cdot)\|^2_{\dot H^1_x} &  \geq \sum_{k=0}^{+\infty}  \int_{\{\sqrt n 2^k <|\xi|< \sqrt n 2^{k+1}\}} |\xi|^2 A^{2^{k+1}} \Phi_k^2(t_*) \widehat{\omega}_k^2 ( \xi) \mathrm d\xi\\ \nonumber
&  \geq \sum_{k=0}^{+\infty}  \int_{\{\sqrt n 2^k <|\xi|< \sqrt n 2^{k+1}\}} |\xi|^2 A^{2^{k+1}} e^{-t_* 2^{k+\alpha+1}} 2^{- 10 (2^k-1)}  2^{10nk} \widehat{\omega}_k^2 ( \xi) \mathrm d\xi\\
&  \geq \sum_{k=0}^{+\infty} n 2^{2k} A^{2^{k+1}} e^{-t_* 2^{k+\alpha+1}} 2^{- 10 (2^k-1)}  2^{10nk} \int_{\{\sqrt n 2^k <|\xi|< \sqrt n 2^{k+1}\}}  \widehat{\omega}_k^2 ( \xi) \mathrm d\xi.\label{eq_u*} 
 \end{align} 
Next we look at the integral part only. We see that since supp$(\widehat{\omega}_k)\subset \{ \sqrt n 2^k <|\xi|<\sqrt n 2^{k+1}\}$ and since $\widehat{\omega}_k\geq 0$  we have
\[
\int_{\{\sqrt n 2^k <|\xi|< \sqrt n 2^{k+1}\}} \widehat{\omega}_k^2 ( \xi) \mathrm d\xi = \|\widehat{\omega}_k \|_{L^2}^2.
\] 
To find a lower bound for $\|\widehat{\omega}_k \|_{L^2}^2$, let us denote by  $\mathcal{C}(\sqrt n 2^k, \sqrt n 2^{k+1})$ the dyadic corona given by the set $\{ \sqrt n 2^k <|\xi|< \sqrt n 2^{k+1}\}$. Then the volume of the corona is given by 
$$|\mathcal{C}(\sqrt n 2^k, \sqrt n 2^{k+1})| = v_n ((\sqrt n 2^{k+1})^n -(\sqrt n 2^{k})^n )  = v_n n^{\frac n 2 } (2^n-1) 2^{nk} = C(n) 2^{nk},$$
where the constant $C(n):= v_n n^{\frac n 2 } (2^n-1)$ is independent of $k$. Now by H\"older's inequality we obtain
\[
\|\widehat{\omega}_k\|_{L^1} \leq |\mathcal{C}(\sqrt n 2^k, \sqrt n 2^{k+1})|^{\tfrac12} \|\widehat{\omega}_k \|_{L^2} 
\leq C(n)^{\frac{1}{2}}2^{\frac{nk}{2}}\|\widehat{\omega}_k \|_{L^2}, 
\]
and recalling that we have the identity $\|\widehat\omega_k\|_{L^1} = \left(\frac{v_n}{2^n}\right)^{2^k}$, stated in Lemma \ref{Lema_Omega}, then we can write
\[
\|\widehat{\omega}_k\|_{L^2}^2 
\geq C(n)^{-1}2^{-nk} \|\widehat{\omega}_k\|_{L^1}^2 
=    C(n)^{-1}2^{-nk} \left(\frac{v_n}{2^n}\right)^{2^{k+1}}
=    C(n)^{-1}2^{-nk} v_n^{2^{k+1}} 2^{-n 2^{k+1} }.
\]
 Plugging this into \eqref{eq_u*} we obtain
\begin{align}\label{eq_sum} \nonumber
\|\mathfrak{u}(t_*,\cdot)\|^2_{\dot H^1_x} 
&  \geq \sum_{k=0}^{+\infty} n 2^{2k} A^{2^{k+1}} e^{-t_* 2^{k+\alpha+1}} 2^{- 10 (2^k-1)} 2^{10nk}  C(n)^{-1}2^{-nk} v_n^{2^{k+1}} 2^{-n 2^{k+1} } \\ \nonumber
&  =  n 2^{10} C(n)^{-1} \sum_{k=0}^{+\infty}  A^{2^{k+1}} e^{-t_* 2^{k+\alpha+1}} 2 ^{-10 \cdot 2^k}  2^{-n2^{k+1}} \times v_n^{2^{k+1}}   2^{(10n+2)k}2^{-nk} \\
& = n 2^{10} C(n)^{-1}  \sum_{k=0}^{+\infty} \left( \frac{ A^{2}}{ e^{t_* 2^{\alpha+1}}   2^{10+2n}}\right)^{2^k}\times  v_n^{2^{k+1}}   2^{k(9n+2)}.
\end{align} 
Since $v_n^{2^{k+1}}   2^{k(9n+2)}\geq 1$  then \eqref{eq_sum} becomes
\begin{align*}
\|\mathfrak{u}(t_*,\cdot)\|^2_{\dot H^1_x}  
&\geq  n 2^{10} C(n)^{-1}   \sum_{k=0}^{+\infty} \left( \frac{ A^{2}}{ e^{t_* 2^{\alpha+1}}   2^{10+2n}}\right)^{2^k}.
\end{align*} 
A sufficient condition for the latter series to diverge is $\frac{ A^{2}}{ e^{t_* 2^{\alpha+1}}   2^{10+2n}} \geq 1 $. Setting $A= e^{\ln(2)}  2^{5+n} $ and substituting   $t_*=\frac{\ln(2)}{2^{\alpha}}$ one has 
\[
\frac{ A^{2}}{  e^{t_* 2^{\alpha+1}}   2^{10+2n}}  
= \frac{ (e^{\ln(2)}  2^{5+n} )^2  }{ e^{\frac{\ln(2)}{2^{\alpha}}2^{\alpha+1}}  2^{10+2n}}  
 = \frac{ e^{2\ln(2)}  2^{10+2n} }{ e^{2\ln(2)}  2^{10+2n}} 
 = 1,
\]
so any value $A\geq  e^{\ln(2)}  2^{5+n} $ will make the series diverge. Hence the norm $\|\mathfrak{u}(t_*,\cdot)\|_{H^1_x}$ will explode too. 
\end{proof}
 We observe that our proof of blow-up does not work if $A$ is chosen  small enough so that the series converges. This can be easily seen by formula \eqref{eq_sum}.

\section*{Appendix}
We sketch here a proof for the second point of Lemma \ref{LemmaPropertiesKernel} (see also \cite{Kolokol1} for general $\alpha$-stable laws) and for simplicity we only study the estimate
\begin{equation}\label{FracEstimate1}
\|(-\Delta)^\frac{s}{2}\mathfrak{p}^\alpha_t\|_{L^1}\leq Ct^{-\frac{s}{\alpha}}.
\end{equation}
Indeed, by definition we have $\left((-\Delta)^\frac{s}{2}\mathfrak{p}^\alpha_t\right)^{\wedge}(\xi)=|\xi|^se^{-t|\xi|^\alpha}=t^{-\frac{s}{\alpha}}\left(|t^{\frac{1}{\alpha}}\xi|^{s}e^{-|t^{\frac{1}{\alpha}}\xi|^\alpha}\right)$, 
since this quantity is a function that belongs to $L^1$ in the $\xi$ variable, we can apply the inverse Fourier transform to obtain
\begin{eqnarray*}
(-\Delta)^\frac{s}{2}\mathfrak{p}^\alpha_t(x)&=&t^{-\frac{s}{\alpha}}\frac{1}{(2\pi)^n}\int_{\mathbb{R}^n}\left(|t^{\frac{1}{\alpha}}\xi|^{s}e^{-|t^{\frac{1}{\alpha}}\xi|^\alpha}\right)e^{ix\xi}d\xi\\
&=&t^{-\frac{s}{\alpha}}t^{-\frac{n}{\alpha}}\frac{1}{(2\pi)^n}\int_{\mathbb{R}^n}\left(|u|^{s}e^{-|u|^\alpha}\right)e^{i(t^{-\frac{1}{\alpha}}x)u}du=t^{-\frac{s}{\alpha}}t^{-\frac{n}{\alpha}}\left((-\Delta)^{\frac{s}{2}}\mathfrak{p}^\alpha_1\right)(t^{-\frac{1}{\alpha}}x),
\end{eqnarray*}
thus, taking the $L^1$-norm we have the homogeneity identity $\|(-\Delta)^\frac{s}{2}\mathfrak{p}^\alpha_t\|_{L^1}=t^{-\frac{s}{\alpha}}\|(-\Delta)^\frac{s}{2}\mathfrak{p}^\alpha_1\|_{L^1}$, and thus we only need to prove that $\|(-\Delta)^\frac{s}{2}\mathfrak{p}^\alpha_1\|_{L^1}<+\infty$. 

For this we recall the Riemann-Liouville representation of the operator $(-\Delta)^\frac{s}{2}$ (which can be seen by passing to the Fourier level):
$$(-\Delta)^\frac{s}{2}(\mathfrak{p}^\alpha_1)=\frac{1}{\Gamma(k-s/2)}\int_0^{+\infty}\tau^{k-s/2-1}(-\Delta)^{k} (h_\tau\ast\mathfrak{p}^\alpha_1)d\tau,$$
where $h_\tau$ is the standard heat kernel, $\Gamma$ is the usual Gamma function and $k$ is any integer such that $k>s/2$. Then, taking the $L^1$-norm and since $\|h_\tau\|_{L^1}=\|\mathfrak{p}^\alpha_1\|_{L^1}=1$, we have:
\begin{align*}
&\|(-\Delta)^\frac{s}{2}(\mathfrak{p}^\alpha_1)\|_{L^1} \\
&\leq \frac{1}{\Gamma(k-s/2)}\left(\int_0^{1}\tau^{k-s/2-1}\|h_\tau\|_{L^1}\|(-\Delta)^{k} \mathfrak{p}^\alpha_1\|_{L^1}d\tau\right.\\
& \quad +\left.\int_1^{+\infty}\tau^{k-s/2-1}\|(-\Delta)^{k} h_\tau\|_{L^1}\|\mathfrak{p}^\alpha_1\|_{L^1}d\tau\right)\\
&\leq  \frac{1}{\Gamma(k-s/2)}\left(\int_0^{1}\tau^{k-s/2-1}\|(-\Delta)^{k} \mathfrak{p}^\alpha_1\|_{L^1}d\tau+\int_1^{+\infty}\tau^{k-s/2-1}\|(-\Delta)^{k} h_\tau\|_{L^1}d\tau\right)\\
&\leq  C\|(-\Delta)^{k} \mathfrak{p}^\alpha_1\|_{L^1}+C'\int_1^{+\infty}\tau^{k-s/2-1}\tau^{-k}d\tau\leq C\|(-\Delta)^{k} \mathfrak{p}^\alpha_1\|_{L^1}+C''.
\end{align*}
It only remains to prove that $\|(-\Delta)^{k} \mathfrak{p}^\alpha_1\|_{L^1}<+\infty$, where $k$ is an integer. For this we use the estimates given in Theorem 7.3.2, page 320, of the book \cite{Kolokol}:
$$\left|\frac{\partial^m}{\partial x^m}\mathfrak{p}^\alpha_1(x)\right|\leq C\min\{1, |x|^{-m}\}\mathfrak{p}^\alpha_1(x), \text{ for } m=1,2,… $$
From this pointwise estimate we easily deduce that $\|(-\Delta)^{k} \mathfrak{p}^\alpha_1\|_{L^1}<+\infty$ and the proof of (\ref{FracEstimate1}) is now complete.

 The same ideas apply to the case $\alpha=2$ which is easier to handle as it corresponds with the usual heat kernel.


\quad\\[5mm]

\begin{multicols}{2}
\begin{minipage}[r]{90mm}
Diego \textsc{Chamorro}\\[3mm]
{\footnotesize
Universit\'e Paris-Saclay,\\
CNRS, Univ. Evry,\\ 
Laboratoire de Mod\'elisation\\ 
Math\'ematique d'Evry,\\ 
23 Boulevard de France,\\
91037 Evry, France\\[2mm]
diego.chamorro@univ-evry.fr
}
\end{minipage}
\begin{minipage}[r]{80mm}
Elena \textsc{Issoglio}\\[3mm]
{\footnotesize
School of Mathematics,\\  
University of Leeds,\\
Leeds, LS2 9JT\\
UK\\[2mm]
e.issoglio@leeds.ac.uk
}
\end{minipage}
\end{multicols}

\end{document}